\theoremstyle{plain}
\newtheorem{theorem}{Theorem}[section]
\newtheorem{proposition}[theorem]{Proposition}
\newtheorem{lemma}[theorem]{Lemma}
\newtheorem{remark}[theorem]{Remark}
\newtheorem{example}[theorem]{Example}
\def\R{\mathbb{R}}
\def\P{\mathbb{P}}
\def\E{\mathbb{E}}
\def\D{\mathbb{D}}
\def\eps{\varepsilon}
\def\ud{\mathrm{d}}
\begin{document}

\title{\textbf{\Large Heat content for Gaussian processes:\\ small-time asymptotic analysis}}

\date{\today}

\author{Kei Kobayashi\thanks{Department of Mathematics, Fordham University. Email:\ kkobayashi5@fordham.edu} \ and Hyunchul Park\thanks{Department of Mathematics, State University of New York at New Paltz, Email:\ parkh@newpaltz.edu}}

\maketitle

\begin{abstract}
This paper establishes the small-time asymptotic behaviors of the regular heat content and spectral heat content for general Gaussian processes in both one-dimensional and multi-dimensional settings, where the boundary of the underlying domain satisfies some smoothness condition. For the amount of heat loss associated with the spectral heat content, the exact asymptotic behavior with the rate function being the expected supremum process is obtained, whereas for the regular heat content, the exact asymptotic behavior is described in terms of the standard deviation function. 
 \vspace{3mm}	

\noindent\textit{Key words:} Gaussian process, regular heat content, spectral heat content, asymptotic behavior\vspace{3mm}

\noindent\textit{2020 Mathematics Subject Classification:} 60G15, 60J45

\end{abstract}

\section{Introduction}\label{section_1}

For a Brownian motion $W=(W_t)_{t\ge 0}$ and a bounded domain $D$ in $\R^d$, consider the function  
\begin{align*}
	Q^W_D(t):=\int_D \P_x(\tau^{W}_D>t)\ud x, \ \ \ t>0, 
\end{align*}
where $\tau^W_D=\inf\{ t>0: W_t\in D^c\}$ denotes the first exit time of $W$ from $D$, and $\P_x$ is the law under which $W$ starts at the point $x\in D$. This function, called the spectral heat content (SHC), measures the amount of heat that has not exited the domain $D$ as of time $t$. The intensive study of the SHC for Brownian motions that was conducted more than three decades ago revealed that the SHC contains geometric information about the domain and spectral information about the infinitesimal generator of the underlying killed Brownian motions; see e.g., \cite{vandenBerg1989,vandenBergGilkey94,vandenBergLeGall94,vandenBerg1990}. 
Associated with the SHC is the regular heat content (RHC) defined by 
\[
	H^W_D(t):=\int_D \P_x(W_t\in D)\ud x, \ \ \ t>0. 
\]
The RHC also measures the amount of heat contained in the domain $D$, but heat particles exiting the domain do not get killed and hence are allowed to return to $D$; see e.g., \cite{Miranda2007,vandenBerg2015} for discussions of the RHC. 
In contrast, for L\'evy processes, the investigation into the RHC and SHC started within the last decade, where the presence of jumps requires careful analysis, especially in a multi-dimensional setting (i.e., $d\ge 2$); see e.g., \cite{Valverde2016,Valverde2017,GPS19,KP22,KP23,KP23-unified,P21,ParkSong19,ParkSong22}. 
The RHC and SHC for processes with delay introduced by inverse subordinators have been studied in \cite{KP22,KP23}.

The purpose of this paper is to introduce the SHC and RHC for general Gaussian processes, including fractional Brownian motions and Ornstein--Uhlenbeck processes, and study their asymptotic behaviors as $t\downarrow 0$. 
We will conduct the study in both one-dimensional and multi-dimensional settings. In the one-dimensional setting, Theorems \ref{thm:GaussianMarkov_regular} and \ref{thm:GaussianMarkov_spectral} establish the exact first-order asymptotic behaviors of the RHC and SHC, respectively, with error bounds of exponential decay obtained. In the multi-dimensional setting, we assume that the domain $D$ is a $C^{1,1}$ open set and that the process has i.i.d.\ Gaussian components; the latter may look somewhat restrictive, but Theorems \ref{thm:GaussianMarkov_regular_high} and \ref{thm:GaussianMarkov_spectral_high} still recover the well-known first-order small-time asymptotic behaviors of the RHC and SHC for Brownian motions, respectively. Separately, Proposition \ref{prop:GaussianMarkov_spectral_upperboundonly} establishes a one-sided bound for the SHC, with an error bound of exponential decay valid for all small enough $t>0$, 
under the weaker assumption that the components of the process are not necessarily i.i.d.

Let us now make three key remarks on our results. First and foremost, as of now, no article can be found in the literature on the SHC and RHC for Gaussian processes, except for the special case of Brownian motions. Therefore, our theorems produce a number of new results as corollaries, including those for fractional Brownian motions and fractional/non-fractional Ornstein--Uhlenbeck processes. These examples are provided in Section \ref{section_examples}.

Second, our statements do not assume self-similarity of the Gaussian process, and to the authors' knowledge, 
even the proofs of Theorems \ref{thm:GaussianMarkov_regular} and \ref{thm:GaussianMarkov_spectral} in the one-dimensional setting are significantly different from the proofs for the Brownian motion case available in the literature including \cite{Valverde2016,Valverde2017,vandenBerg1989}.
On the other hand, the proof of Theorem \ref{thm:GaussianMarkov_spectral_high} employs an \textit{approximate} scaling argument that is similar to the one in \cite{KP23-unified}, which requires an assumption on weak convergence of a scaled Gaussian process in the Skorokhod space. We admit that the latter is a strong condition, but it still covers some non-self-similar Gaussian processes.  

Our third remark is that for the exact asymptotic results in Theorems \ref{thm:GaussianMarkov_regular_high}, \ref{thm:GaussianMarkov_regular}, \ref{thm:GaussianMarkov_spectral} and \ref{thm:GaussianMarkov_spectral_high}, the rate functions for the RHC and SHC are described in terms of the standard deviation function and the expected supremum function of the underlying Gaussian process, respectively. The distinct descriptions for the rate functions reflect the difference in the natures of the RHC and SHC. Namely, the SHC requires tracking of particle locations from the beginning through a given time point $t$ since whether the particles have ever exited the domain or not matters (as they are killed upon exit), whereas the RHC concerns particle locations at a given time $t$ only since particles never get killed.

The organization of this paper is as follows. Section \ref{section_2} reviews known facts and introduces the RHC and SHC for Gaussian processes. 
Sections \ref{section_results_regular} and \ref{section_results_spectral} provide our results for the RHC and SHC, respectively, as well as their proofs, except for the proof of Theorem \ref{thm:GaussianMarkov_spectral_high}, which requires a series of lemmas and technical arguments and is postponed to the final section, Section \ref{proof}. Section \ref{section_examples} illustrates applications of our results to some concrete Gaussian processes.

\section{Preliminaries}\label{section_2}

A stochastic process $X=(X_t)_{t\in[0,1]}$ in $\R$ is called a \textit{Gaussian process} if all its finite-dimensional distributions are multivariate Gaussian. The distribution of a zero-mean Gaussian process $X$ is characterized by its covariance function 
\[
	R_X(s,t):=\textrm{Cov}(X_s,X_t)=\E[X_sX_t], \ \ s,t\in [0,1].
\] 
Throughout the paper, we assume that a given Gaussian process $X$ is defined on the space $\Omega=\D[0,1]$ of c\`adl\`ag functions on $[0,1]$ that is equipped with the Skorokhod $J_1$ topology, with $X_t(\omega)=\omega(t)$ for $\omega\in \D[0,1]$ and $t\in[0,1]$. The Borel $\sigma$-algebra $\mathcal{F}=\mathcal{B}(\D[0,1])$ generated by the $J_1$ topology coincides with the $\sigma$-algebra generated by the coordinate projections (or the finite-dimensional cylinder sets); see the discussion given in \cite[Section 11.5.3]{Whitt}.
 We choose a probability measure $\P$ on $(\Omega,\mathcal{F})=(\D[0,1],\mathcal{B}(\D[0,1]))$ in such a way that the projections $(X_t)_{t\in[0,1]}$ constitute a zero-mean Gaussian process with covariance function $R_X$ such that  $\P(X_0=0)=1$.

For a zero-mean Gaussian process $X$, the variance function and its supremum are denoted by 
\begin{align}\label{0001}
	R_t:=R_X(t,t)=\E[X_t^2] \ \ \ \textrm{and} \ \ \ 
	\sigma^2_t:=\sup_{s\in[0,t]} R_s, \ \ t\in [0,1], 
\end{align}
respectively.
The first moment of the running supremum of $X$ is denoted by 
\[
	\mu_t:=\E\biggl[\sup_{s\in[0,t]}X_s\biggr],
	\ \ t\in [0,1].
\]
For example, if $X$ is a Brownian motion
$W=(W_t)_{t\in [0,1]}$ in $\R$ with characteristic function $\E[e^{iuW_t}]=e^{-tu^2}$, then $R_t=2t$ and by \cite[Problem 2.8.2]{KaratzasShreve},
\begin{align}\label{0006}
	\mu_t
	=\int_0^\infty \frac{x}{\sqrt{\pi t}}e^{-\frac{x^2}{4t}}\ud x 
	=2\sqrt{\frac{t}{\pi}}.
\end{align}

We assume throughout the paper that for each fixed $t\in(0,1]$, the zero-mean Gaussian variable $X_t$ is non-degenerate; i.e., $\P(X_t\ne 0)>0$. This immediately implies $R_t>0$. Moreover, it follows that $\mu_t>0$. Indeed, the assumption that $X_0=0$ yields $\mu_t\ge 0$, whereas if $\mu_t=0$, then $\sup_{s\in[0,t]} X_s=0$ $\P$-a.s., so in particular, $X_t\le 0$ $\P$-a.s.; however, this is impossible since the support of the non-degenerate Gaussian variable $X_t$ is $\R$.
Note also that the latter argument yields $\sup_{s\in[0,t]} X_s>0$ $\P$-a.s., and hence, $\mu_t$ has the integral representation
$
	\mu_t=\int_0^\infty \P\bigl(\sup_{s\in[0,t]} X_s>x\bigr)\ud x.
$

For each $t\in(0,1]$, assume that 
 \begin{align}\label{a.s.boundedness}
 	\sup_{s\in[0,t]}X_s<\infty \ \ \P\textrm{-a.s.},
\end{align}
which is known to be equivalent to the condition that $\mu_t<\infty$. 
In fact, the following Borell--TIS inequality holds (discovered independently by Borell and by Tsirelson, Ibragimov and Sudakov):
\begin{align}\label{Borell-TIS}
	\P\biggl(\sup_{s\in[0,t]} X_s>x\biggr)\le 2 e^{-\frac{(x-\mu_t)^2}{2\sigma_t^2}} \ \ \textrm{for all} \ \ x>\mu_t. 
\end{align}
See e.g., the discussion given in \cite[Section 1]{Samorodnitsky1991}. 
Moreover, $\sigma^2_t<\infty$ due to  \cite[Theorem 2.5]{MarcusShepp1971}.
For stationary Gaussian processes, the almost sure boundedness condition \eqref{a.s.boundedness} is equivalent to continuity of the sample paths; see e.g., \cite[Theorem 10.2]{Lifshits_book2}.

Under the almost sure boundedness condition \eqref{a.s.boundedness}, by the dominated convergence theorem and the right-continuity of the paths of $X$, 
\begin{align}\label{limitmeansup}
	\mu_t\downarrow 0 \ \ \textrm{as} \  \ t\downarrow 0.
\end{align}
The almost sure right-continuity of $X$ implies right-continuity in probability, and since convergence in probability in the space of zero-mean Gaussian variables coincides with convergence in $L^2$ due to \cite[Theorem 1.4]{Jansen}, it follows that the variance function $R_t$ defined in \eqref{0001} is right-continuous. 
In particular,
\begin{align}\label{limitvar}
	R_t\downarrow 0 \ \ \textrm{as}  \ \ t\downarrow 0, \ \ \textrm{or equivalently,} \ \ \sigma^2_t\downarrow 0 \ \ \textrm{as} \  \ t\downarrow 0. 
\end{align}

This paper also investigates a stochastic process $(X_t)_{t\in [0,1]}=(X_t^{1},\ldots,X_t^d)_{t\in [0,1]}$ in $\R^d$ with $d\ge 2$ whose components are i.i.d.\ zero-mean Gaussian processes in $\R$. Such processes are simply referred to as Gaussian processes in $\R^d$ and are considered to be random elements taking values in the space $\D[0,1]$ of $\R^d$-valued c\`adl\`ag functions on $[0,1]$ that is equipped with the $J_1$ topology.

Given a Gaussian process $X=(X_t)_{t\in[0,1]}$ starting at the origin under $\P$ and a bounded open set $D$ in $\R^d$, let $\{\P_x:x\in D\}$ be a family of probability measures on $(\D[0,1],\mathcal{B}(\D[0,1]))$ defined by
\begin{align}\label{familyprobabilitymeasures}
	\P_x(F):=\P(F-x), \ \ F\in \mathcal{B}(\D[0,1]),
\end{align}
where $F-x:=\{\omega\in \D[0,1]: \omega(\cdot)+x\in F\}$. The process $X$ under the law $\P_x$ is a shift of the process $X$ under the law $\P=\P_0$ by $x$.
The random time
\[
	\tau^X_D=\inf\{ t>0: X_t\in D^c\}
\]
represents the first exit time of $X$ from the domain $D$. Since $D$ is an open set and $X$ is right-continuous at 0, it follows that $\tau^X_D>0$ $\P_x$-a.s.\ for any $x\in D$. 
The \textit{spectral heat content} and the \textit{regular heat content} for the Gaussian process $X$ at time $t\in(0,1]$ are defined by  
\begin{align}\label{0053}
	Q^X_D(t):=\int_D \P_x(\tau^{X}_D>t)\ud x \ \ \ \textrm{and} \ \ \ 
	H^{X}_D(t):=\int_D \P_x(X_t\in D)\ud x,
\end{align}
respectively.
Since $\{\tau^{X}_D>t\}\subset \{X_t\in D\}$, it follows that 
\begin{align}\label{Q-and-H}
	Q^X_D(t)\le H^{X}_D(t).
\end{align}
The next lemma confirms that the above definitions of the spectral heat content and regular heat content are well-defined.

\begin{lemma}
Let $X=(X_t)_{t\in[0,1]}$ be a stochastic process in $\R^d$ with c\`adl\`ag paths starting at the origin under the probability measure $\P=\P_0$. Let $D$ be a bounded open set in $\R^d$ and let $F\in \mathcal{B}(\D[0,1])$. 
Let a family of probability measures $\{\P_x:x\in D\}$ be defined as in \eqref{familyprobabilitymeasures}. 
Then the mapping $x\mapsto \P_x(F)$ is $\mathcal{B}(D)$-measurable, and consequently, 
the integrals defining the spectral and regular heat contents in \eqref{0053} are well-defined.
\end{lemma}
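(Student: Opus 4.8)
The plan is to reduce everything to a measurability statement about the map $(x,\omega)\mapsto\mathbf{1}_F(\omega+x)$ and then apply Fubini/Tonelli-type reasoning together with a monotone-class (Dynkin) argument. First I would observe that, by the definition \eqref{familyprobabilitymeasures}, we may write $\P_x(F)=\int_{\D[0,1]}\mathbf{1}_F(\omega+x)\,\P(\ud\omega)$, where $\omega+x$ denotes the path $t\mapsto\omega(t)+x$ (constant shift). So it suffices to show that for each fixed $F\in\mathcal{B}(\D[0,1])$ the function $g_F(x):=\int\mathbf{1}_F(\omega+x)\,\P(\ud\omega)$ is $\mathcal{B}(D)$-measurable; the standard argument is that this will follow from joint measurability of $(x,\omega)\mapsto\mathbf{1}_F(\omega+x)$ in $\mathcal{B}(D)\otimes\mathcal{B}(\D[0,1])$, since then measurability of the partial integral is a consequence of Tonelli's theorem applied to the nonnegative bounded function $\mathbf{1}_F(\omega+x)$.

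The heart of the matter is therefore to prove that the shift map $\Phi\colon D\times\D[0,1]\to\D[0,1]$, $\Phi(x,\omega):=\omega+x$, is $\mathcal{B}(D)\otimes\mathcal{B}(\D[0,1])$-to-$\mathcal{B}(\D[0,1])$ measurable. I would do this by checking it on a generating $\pi$-system. As recalled in the excerpt, $\mathcal{B}(\D[0,1])$ coincides with the $\sigma$-algebra generated by the coordinate projections $\pi_t\colon\omega\mapsto\omega(t)$ for $t\in[0,1]$ (equivalently by the finite-dimensional cylinder sets). Hence it is enough to show that for every $t$ and every Borel set $B\subseteq\R^d$ the preimage $\Phi^{-1}(\pi_t^{-1}(B))=\{(x,\omega): \omega(t)+x\in B\}$ lies in $\mathcal{B}(D)\otimes\mathcal{B}(\D[0,1])$. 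But $(x,\omega)\mapsto\omega(t)+x$ is the composition of the manifestly jointly measurable map $(x,\omega)\mapsto(x,\pi_t(\omega))\in D\times\R^d$ with the continuous addition map $\R^d\times\R^d\to\R^d$, so its preimage of the Borel set $B$ is jointly measurable. A $\pi$-$\lambda$ (monotone-class) argument then upgrades this from the generating cylinder sets to all of $\mathcal{B}(\D[0,1])$, giving joint measurability of $\Phi$, and composing with the indicator $\mathbf{1}_F$ gives joint measurability of $(x,\omega)\mapsto\mathbf{1}_F(\omega+x)$.

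With joint measurability in hand, Tonelli's theorem immediately yields that $x\mapsto g_F(x)=\P_x(F)$ is $\mathcal{B}(D)$-measurable (and bounded by $1$), which is the first assertion. For the ``consequently'' part, I would note that $\tau_D^X>t$ and $\{X_t\in D\}$ are events in $\mathcal{B}(\D[0,1])$ — the former because, $D$ being open and $X$ being right-continuous, $\{\tau_D^X>t\}=\{\omega(s)\in D \text{ for all rational } s\in[0,t]\}\cup(\text{paths left-continuous adjustments})$, more precisely it can be written as a countable intersection over rational $s\le t$ of the measurable sets $\pi_s^{-1}(D)$ after accounting for càdlàg regularity, so it is Borel in $\D[0,1]$; the latter is simply $\pi_t^{-1}(D)$. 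Applying the first part to these two choices of $F$ shows the integrands in \eqref{0053} and \eqref{def_heatcontent} are measurable and bounded, hence integrable over the bounded (finite-Lebesgue-measure) set $D$, so the heat contents are well-defined finite quantities.

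I expect the main obstacle to be the careful verification that $\{\tau_D^X>t\}\in\mathcal{B}(\D[0,1])$ for a càdlàg process and an open set $D$ — one has to be slightly careful that the first exit time is well-behaved because exit can occur ``at the boundary'' via a jump, but since $D$ is open one in fact has $\{\tau_D^X>t\}=\bigcap_{s\in[0,t]\cap\mathbb{Q}}\pi_s^{-1}(D)\cap\{\pi_t\in D\}$ up to the usual right-continuity bookkeeping, and this is a countable operation on sets in the generating family, hence Borel. The shift-measurability step, while the conceptual core, is routine once phrased via the coordinate-projection $\sigma$-algebra and the monotone-class theorem.
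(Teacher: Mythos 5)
Your argument follows essentially the same route as the paper: both reduce to the generating $\pi$-system of finite-dimensional cylinder sets, invoke Fubini/Tonelli to get measurability in $x$ for the cylinder case, and use a Dynkin/monotone-class step to pass to general $F\in\mathcal{B}(\D[0,1])$. Your framing via joint measurability of the shift map $\Phi(x,\omega)=\omega+x$ is a slightly more modular packaging of the same ingredients (the paper instead runs the Dynkin argument directly on the collection of $F$ for which $x\mapsto\P_x(F)$ is already known to be measurable), but the content is the same.

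One caveat, which is in fact also glossed over in the paper's own proof: the identity
\[
\{\tau_D^X>t\}=\bigcap_{s\in\mathbb{Q}\cap[0,t]}\pi_s^{-1}(D)\cap\pi_t^{-1}(D)
\]
is not literally correct for a c\`adl\`ag path and an open set $D$. A continuous (hence c\`adl\`ag) path can touch $\partial D\subset D^c$ at a single irrational time $\tau<t$ while lying strictly inside $D$ at every rational time and at time $t$; such a path belongs to the right-hand side but not to $\{\tau_D^X>t\}$, since the exit happens at $\tau$. So the right-hand side is strictly larger than the left, and the ``right-continuity bookkeeping'' you allude to cannot repair the set equality itself. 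The measurability conclusion is nevertheless true, and a clean way to obtain it is via the c\`adl\`ag function $s\mapsto d(\omega(s),D^c)$: by right-continuity one has $\inf_{s\in[0,t]}d(\omega(s),D^c)=\inf_{s\in(\mathbb{Q}\cap[0,t])\cup\{t\}}d(\omega(s),D^c)$ (equality of infima, not of level sets), so
\[
\{\tau_D^X>t\}=\Bigl\{\omega:\ \inf_{s\in(\mathbb{Q}\cap[0,t])\cup\{t\}}d(\omega(s),D^c)>0\Bigr\},
\]
a countable infimum of $\mathcal{B}(\D[0,1])$-measurable functions, hence Borel. Spelling this out would close the only real gap in your write-up.
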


\begin{proof}
For a finite-dimensional cylinder set of the form $F=\{\omega\in \D[0,1]: \omega(t_i)\in A_i \ \textrm{for} \ i=0,1,\ldots,k\}$, where $k\in\mathbb{N}$, $0=t_0< t_1<\cdots<t_k=1$, and  $A_i\in \mathcal{B}(\R^d)$ for $i=0,1,\ldots,k$, 
\begin{align*}
	\P_x(F)
	&=\P(x\in A_0, X_{t_1}+x\in A_1, \, \ldots, \, X_{t_k}+x\in A_k)\\
	&=\mathbf{1}_{A_0}(x)\int_{\R^{kd}}\mathbf{1}_{A_1}(z_1+x)\cdots\mathbf{1}_{A_k}(z_k+x) \P(X_{t_1}\in \ud z_1,\ldots,X_{t_k}\in \ud z_k).
\end{align*}
Since the mapping 
$
	(x,z_1,\ldots,z_k)\mapsto \mathbf{1}_{A_0}(x)\mathbf{1}_{A_1}(z_1+x)\cdots\mathbf{1}_{A_k}(z_k+x) 
$
 is nonnegative and $\mathcal{B}(D)\times \mathcal{B}(\R^{kd})$-measurable, it follows as part of the statement of the Fubini theorem that the mapping $x\mapsto \P_x(F)$ is $\mathcal{B}(D)$-measurable.

Now, all the finite-dimensional cylinder sets $F$ of the above form generate the $\sigma$-algebra $\mathcal{B}(\D[0,1])$, and the family of sets $F\in \mathcal{B}(\D[0,1])$ for which the mapping $x\mapsto \P_x(F)$ is $\mathcal{B}(D)$-measurable forms a Dynkin system. Therefore, by the Dynkin system theorem (see e.g., \cite[Chapter 2, Theorem 1.3]{KaratzasShreve}), the mapping $x\mapsto \P_x(F)$ is $\mathcal{B}(D)$-measurable for any set $F\in \mathcal{B}(\D[0,1])$. 

Finally, for a fixed $t\in(0,1]$, since $\{\tau_D^X>t\}=\cap_{s\in\mathbb{Q}\cap [0,t]}\{X_s\in D\}$ and $\{X_s\in D\}\in \mathcal{B}(\D[0,1])$ for each $s\in [0,t]$, 
it follows that $\{\tau_D^X>t\}\in \mathcal{B}(\D[0,1])$. Hence, the mapping $x\mapsto \P_x(\tau^{X}_D>t)$ is $\mathcal{B}(D)$-measurable; consequently, 
the integral defining the spectral heat content in \eqref{0053} is well-defined. 
Similarly, the integral defining the regular heat content is well-defined.
\end{proof}

An open set $D$ in $\R^d$ is said to be $C^{1, 1}$ if 
there exist a localization radius $R_1>0$ and a constant $\Lambda>0$ that satisfy the following condition:  
for every $z\in \partial D$, there exist a $C^{1, 1}$ function $\phi=\phi_z: \R^{d-1}\to \R$ 
and an orthonormal coordinate system $CS_z: y=(y^1,\ldots,y^{d-1},y^d)=(\widetilde y, y^d)$
with origin at $z$ 
such that $\phi(0)=0$, $\nabla \phi(0)=(0, \cdots, 0)$, $\|\nabla\phi\|_\infty\le \Lambda$, 
$|\nabla \phi(x_1)-\nabla \phi(x_2)|\le \Lambda |x_1-x_2|$ for $x_1,x_2\in \R^{d-1}$, and
$B(z, R_1)\cap D=B(z, R_1)\cap \{ y=(\widetilde y, y^d) \mbox{ in } CS_z: y^d>\phi(\widetilde y)\}.$ 
By \cite[Lemma 2.2]{AKSZ2007}, 
an open set $D$ is $C^{1, 1}$ if and only if it satisfies the uniform interior and exterior ball condition; i.e., there exists $R>0$ that satisfies the following condition: 
for every $z\in\partial D$, there exist open balls $B_1$ and $B_2$ of the same radius $R$ such that 
\begin{align}\label{def:interior-exterior}
	B_1\subset D, \ B_2\subset\R^d\setminus\overline{D}, \ \textrm{and} \ \partial B_1\cap\partial B_2=\{z\}.
\end{align}
This paper uses the uniform interior and exterior $R$-ball condition for proving Theorem \ref{thm:GaussianMarkov_spectral_high}, and the above function $\phi$ does not appear in any form. We refer to $R$ as the characteristic radius of the $C^{1,1}$ set.

Given a bounded $C^{1,1}$ domain $D$ with characteristic radius $R$, for each $a\in(0,R/2]$, let
\begin{align}\label{def:Da}
	D_{a}=\{x\in D : \delta_{D}(x)>a\}, \ \ \textrm{where} \ \ \delta_{D}(x)=\inf\{|x-y| : y \in \partial D\}. 
\end{align}
That is, $D_a$ is a region obtained by removing points near the boundary $\partial D$ from $D$. Due to the $C^{1,1}$ condition, $D_a$ is non-empty.
It also follows from \cite[Lemma 6.7]{vandenBerg1989} that 
\begin{align}\label{eqn:vdBD89}
|\partial D|\left(\frac{R-a}{R}\right)^{d-1}\leq |\partial D_{a}|\leq |\partial D|\left(\frac{R}{R-a}\right)^{d-1}, 
\end{align}
where $|\partial D|$ and $|\partial D_a|$ are 
the $(d-1)$-dimensional Lebesgue measures of the boundaries $\partial D$ and $\partial D_a$ of the sets $D$ and $D_a$, respectively.
In particular, 
\begin{align}\label{eqn:vdBD89-b}
	|\partial D_{a}|\leq 2^{d-1}|\partial D| \ \ \textrm{for any} \  a\in(0,R/2].
\end{align}

If $X$ is given by a Brownian motion $W$ with $\E[e^{-i\langle \xi, W_t\rangle}]=e^{-t|\xi|^2}$, then 
for a bounded open interval $D$ in $\R$ or a bounded connected $C^{1,1}$ open set $D$ in $\R^d$ with $d\ge 2$
(see \cite[Theorem 6.2]{vandenBerg1989} and \cite[Theorem 2]{vandenBerg2015}), 
\begin{align}
	\label{0031}\lim_{t\downarrow 0}\frac{|D|-Q_{D}^{W}(t)}{\sqrt{t}}&= \frac{2 |\partial D|}{\sqrt{\pi}};\\
	\label{0032}\lim_{t\downarrow 0}\frac{|D|-H^{W}_D(t)}{\sqrt{t}}&= \frac{|\partial D|}{\sqrt{\pi}},
\end{align}
where $|D|$ denotes the $d$-dimensional Lebesgue measure of $D$, and if $D=(a,b)$ in $\R$, then $|D|=b-a$ and $|\partial D|=|\{a,b\}|=2$.

\section{The short-time behavior of the regular heat content}\label{section_results_regular}

We first derive the asymptotic behavior of the \textit{regular} heat content for general Gaussian processes. 
In short, the amount of heat loss from the domain $D$ behaves like the standard deviation function $\sqrt{R_t}$ in small time.
Note that the result below immediately reduces to \eqref{0032} if $X$ is taken to be a Brownian motion $W$ with $\E[e^{-i\langle \xi, W_t\rangle}]=e^{-t|\xi|^2}$. 
Also recall that  if $D=(a,b)$ in $\R$, then $|D|=b-a$ and $|\partial D|=2$.

\begin{theorem}\label{thm:GaussianMarkov_regular_high}
Let $D$ be a non-empty, bounded interval in $\R$ or a non-empty, bounded, connected $C^{1,1}$ open set in $\R^d$  with $d\ge 2$. 
Let $X=(X_t)_{t\in [0,1]}$ be a stochastic process in $\R^d$ whose components are i.i.d.\ zero-mean Gaussian processes with c\`adl\`ag paths starting at the origin and common variance function $R_t$ under the probability measure $\P=\P_0$ such that $X_t$ is non-degenerate for all $t\in (0,1]$. 
 Then
\[
 \lim_{t\downarrow 0} \frac{|D|-H^X_D(t)}{\sqrt{R_t}}
=\frac{|\partial D|}{\sqrt{2\pi}}.
\]
\end{theorem}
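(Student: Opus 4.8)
The plan is to reduce the statement to an elementary computation with the standard normal law and, in dimension $d\ge 2$, to localize the analysis to a vanishingly thin layer around $\partial D$, exploiting the $C^{1,1}$ geometry set up in Section~\ref{section_2}.

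\emph{Reduction.} Because the components of $X$ are i.i.d.\ zero–mean Gaussian with variance $R_t$, the vector $X_t$ has the same law as $\sqrt{R_t}\,Z$ for a standard Gaussian vector $Z=(Z^1,\dots,Z^d)$ in $\R^d$; hence, by \eqref{familyprobabilitymeasures} and \eqref{def_heatcontent},
\[
  |D|-H^X_D(t)=\int_D\P\bigl(x+\sqrt{R_t}\,Z\notin D\bigr)\,\ud x.
\]
Since $R_t>0$ for $t\in(0,1]$ and $R_t\downarrow 0$ as $t\downarrow 0$ by \eqref{limitvar}, it is enough to set $s:=\sqrt{R_t}$ and prove that $s^{-1}\int_D\P(x+sZ\notin D)\,\ud x\to|\partial D|/\sqrt{2\pi}$ as $s\downarrow 0$. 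The constant will come out of the identity $\int_0^\infty\Phi(-u)\,\ud u=\E[\max\{Z^1,0\}]=1/\sqrt{2\pi}$, with $\Phi$ the standard normal distribution function on $\R$, and I will use repeatedly that $\P(|Z|\ge\rho/s)$ decays faster than any power of $s$ as $s\downarrow 0$ for each fixed $\rho>0$.

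\emph{The one–dimensional case, and localization when $d\ge 2$.} If $D=(a,b)\subset\R$, then for $x\in(a,b)$ the events $\{x+sZ^1\le a\}$ and $\{x+sZ^1\ge b\}$ are disjoint, so $\P(x+sZ^1\notin D)=\Phi(-(x-a)/s)+\Phi(-(b-x)/s)$, and the substitutions $u=(x-a)/s$ and $u=(b-x)/s$ give the claim directly, with $|\partial D|=2$. For $d\ge 2$ I would fix $\alpha\in(0,R/2]$ and split $D=D_\alpha\cup(D\setminus D_\alpha)$ with $D_\alpha$ as in \eqref{def:Da}. On $D_\alpha$ one has $\delta_{D}(x)\ge\alpha$, so $\{x+sZ\notin D\}\subset\{|Z|\ge\alpha/s\}$ and $\int_{D_\alpha}\P(x+sZ\notin D)\,\ud x\le|D|\,\P(|Z|\ge\alpha/s)$ is negligible relative to $s$. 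Thus the problem reduces to the integral over the boundary layer $D\setminus D_\alpha=\{x\in D:0<\delta_{D}(x)\le\alpha\}$.

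\emph{The boundary layer.} Since $\alpha<R$, every $x$ in the layer has a unique nearest point $z=z(x)\in\partial D$, and in the coordinate system $CS_z$ the point $x$ has coordinates $(0,\dots,0,r)$ with $r:=\delta_{D}(x)$, while $D\cap B(z,R)=\{y^d>\phi_z(\widetilde y)\}\cap B(z,R)$ with $\phi_z(0)=0$, $\nabla\phi_z(0)=0$, and $|\phi_z(\widetilde y)|\le\tfrac{\Lambda}{2}|\widetilde y|^2$ for all $\widetilde y\in\R^{d-1}$ (the last bound following from the $\Lambda$–Lipschitz continuity of $\nabla\phi_z$); the existence and uniqueness of this nearest point, realizing the distance along the inner normal, is standard for $C^{1,1}$ sets with the uniform ball condition \eqref{def:interior-exterior}, cf.\ \cite{AKSZ2007}. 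Writing the coordinates of $Z$ in $CS_z$ as $(\widetilde Z,Z^d)$ — again i.i.d.\ standard Gaussian, by rotational invariance — the point $x+sZ$ has $CS_z$–coordinates $(s\widetilde Z,r+sZ^d)$, so that $\{x+sZ\notin D\}$ agrees, outside $\{x+sZ\notin B(z,R)\}\subset\{|Z|\ge R/(2s)\}$, with $\{r+sZ^d\le\phi_z(s\widetilde Z)\}$. Since $|\phi_z(s\widetilde Z)|\le\tfrac{\Lambda}{2}s^2|\widetilde Z|^2$ and the density of $\Phi$ is at most $1/\sqrt{2\pi}$, conditioning on $\widetilde Z$, using a mean–value bound for $\Phi$, and taking expectations yields, uniformly over the layer,
\[
  \Bigl|\,\P(x+sZ\notin D)-\Phi\bigl(-\delta_{D}(x)/s\bigr)\,\Bigr|\le\frac{\Lambda(d-1)}{2\sqrt{2\pi}}\,s+\P\bigl(|Z|\ge R/(2s)\bigr).
\]
Integrating over $D\setminus D_\alpha$, I would then apply the coarea formula ($\delta_{D}$ is $1$–Lipschitz with $|\nabla\delta_{D}|=1$ a.e.\ there) to write $\int_{D\setminus D_\alpha}\Phi(-\delta_{D}(x)/s)\,\ud x=\int_0^\alpha\Phi(-r/s)\,|\partial D_r|\,\ud r$, bound $|\partial D_r|$ between $|\partial D|\bigl(\tfrac{R-\alpha}{R}\bigr)^{d-1}$ and $|\partial D|\bigl(\tfrac{R}{R-\alpha}\bigr)^{d-1}$ using \eqref{eqn:vdBD89}, and note that $s^{-1}\int_0^\alpha\Phi(-r/s)\,\ud r=\int_0^{\alpha/s}\Phi(-u)\,\ud u\to1/\sqrt{2\pi}$. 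Combining this with the negligible $D_\alpha$–contribution and the error term above, $s^{-1}\int_D\P(x+sZ\notin D)\,\ud x$ is squeezed, as $s\downarrow 0$, between $\bigl(\tfrac{R-\alpha}{R}\bigr)^{d-1}\tfrac{|\partial D|}{\sqrt{2\pi}}-\tfrac{\Lambda(d-1)}{2\sqrt{2\pi}}|D\setminus D_\alpha|$ and $\bigl(\tfrac{R}{R-\alpha}\bigr)^{d-1}\tfrac{|\partial D|}{\sqrt{2\pi}}+\tfrac{\Lambda(d-1)}{2\sqrt{2\pi}}|D\setminus D_\alpha|$; letting $\alpha\downarrow 0$, so that $|D\setminus D_\alpha|\to0$ and $(R/(R-\alpha))^{d-1}\to1$, gives the result.

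\emph{Main obstacle.} The only genuinely delicate part is the boundary–layer step: one must justify that on $\{0<\delta_{D}<R\}$ the metric projection onto $\partial D$ is single–valued and realizes the distance along the inner normal — which is exactly where the uniform interior/exterior $R$–ball condition \eqref{def:interior-exterior} is used — and one needs the quadratic bound on $\phi_z$ to hold \emph{uniformly} in $z\in\partial D$, so that, once integrated against the surface measure of the thin layer, the curvature correction $\phi_z(s\widetilde Z)$ contributes only $o(s)$. Everything else reduces to the one–dimensional computation, the coarea formula, and Gaussian tail estimates.
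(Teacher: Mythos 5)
Your argument is correct, and it takes a genuinely different route from the paper's. The paper's proof of Theorem~\ref{thm:GaussianMarkov_regular_high} is a five-line reduction: it rewrites the known Brownian-motion asymptotic \eqref{0032} (cited from van den Berg) in terms of the explicit Gaussian kernel $\frac{1}{(4\pi t)^{d/2}}e^{-|x-y|^2/(4t)}$ and then, using \eqref{limitvar}, replaces the time parameter by $R_t/2$ so that the integrand becomes exactly the one-time marginal of the Gaussian process $X$; the conclusion is immediate. You instead give a self-contained, from-first-principles proof: reduce to $X_t\stackrel{\mathrm d}{=}\sqrt{R_t}\,Z$, handle $d=1$ by direct computation with $\Phi$, and for $d\geq 2$ split $D=D_\alpha\cup(D\setminus D_\alpha)$, show the interior piece is negligible via Gaussian tails, approximate $\P(x+sZ\notin D)$ by the flat half-space probability $\Phi(-\delta_D(x)/s)$ with an error governed by the $C^{1,1}$ bound $|\phi_z(\widetilde y)|\leq\tfrac{\Lambda}{2}|\widetilde y|^2$, and finish with the coarea formula and \eqref{eqn:vdBD89}. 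In effect you are re-proving the Brownian case \eqref{0032} rather than invoking it. The paper's route is shorter and makes clear that the content of the theorem is a simple change of the time parameter; yours is more elementary, independent of the external reference, and yields a quantitative $O(\sqrt{R_t})$ curvature error as a by-product (a cousin of the explicit error bound the paper proves only in the one-dimensional Theorem~\ref{thm:GaussianMarkov_regular}). Your boundary-layer steps all check out: the density bound for $\Phi$ gives $|\P(x+sZ\notin D)-\Phi(-\delta_D(x)/s)|\leq\tfrac{\Lambda(d-1)}{2\sqrt{2\pi}}s+\P(|Z|\geq R/(2s))$ uniformly on the layer, the uniform ball condition \eqref{def:interior-exterior} justifies single-valuedness of the nearest-point projection and $|\nabla\delta_D|=1$ a.e.\ there so the coarea identity $\int_{D\setminus D_\alpha}\Phi(-\delta_D(x)/s)\,\ud x=\int_0^\alpha\Phi(-r/s)|\partial D_r|\,\ud r$ is valid, and the squeeze via \eqref{eqn:vdBD89} and $\alpha\downarrow 0$ closes the argument.
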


\begin{proof}
Recall that the almost sure right continuity of $X$ implies $R_t\downarrow 0$ as $t\downarrow 0$, as stated in  \eqref{limitvar}.
Note the identity $|D|-H^X_D(t)=\int_D \P_x(X_t\in D^c)\ud x$. Applying this identity to a Brownian motion $W$ with $\E[e^{-i\langle \xi, W_t\rangle}]=e^{-t|\xi|^2}$, we can rewrite the statement in \eqref{0032} as
\[
	\lim_{t\downarrow 0}\frac{1}{\sqrt{t}}\int_D \biggl(\int_{D^c}\dfrac{1}{(4\pi t)^{d/2}}e^{-\frac{|x-y|^2}{4t}}\ud y \biggr)\ud x
	= \frac{|\partial D|}{\sqrt{\pi}}.
\]
This, together with \eqref{limitvar}, yields
\[
	\lim_{t\downarrow 0} \sqrt{\frac{2}{R_t}}\int_D \biggl(\int_{D^c}\dfrac{1}{(2\pi R_t)^{d/2}}e^{-\frac{|x-y|^2}{2R_t}}\ud y \biggr)\ud x
	= \frac{|\partial D|}{\sqrt{\pi}}.
\]
This is the desired result since the components of $X$ are assumed to be i.i.d.\ zero-mean Gaussian processes with common variance function $R_t$.  
\end{proof}

In \cite{Valverde2017}, the author derived the small-time asymptotic behavior of the regular heat content and spectral heat content for stable L\'evy processes. The key idea of the argument was to express a given stable process as a Brownian motion time-changed by an independent stable subordinator and to utilize the self-similarity of the Brownian motion; i.e., $(W_{at})_{t\ge 0} =^{\mathrm{d}} (a^{1/2}W_t)_{t\ge 0}$. However, it turns out that the use of the self-similarity is not essential. To show this, we provide an alternative proof of Theorem \ref{thm:GaussianMarkov_regular_high} in the one-dimensional case that comes with an associated error bound of exponential decay. Namely:

\begin{theorem}\label{thm:GaussianMarkov_regular}
Let $D$ be a non-empty, bounded interval in $\R$. 
Let $X=(X_t)_{t\in [0,1]}$ be a Gaussian process in $\R$ with c\`adl\`ag paths starting at the origin 
and variance function $R_t$ 
under the probability measure $\P=\P_0$ such that 
$X_t$ is non-degenerate for all $t\in(0,1]$. 
 Then for any $t\in[0,1]$,
\[
	|D|-H_D^X(t)=\sqrt{\frac{2R_t}{\pi}}- \mathrm{Error}_{H}(t),
\]
where 
\begin{align}\label{error-bound-H1}
	0<\mathrm{Error}_{H}(t)
	\le \frac{2R_t^{\frac 32}}{|D|^2\sqrt{2\pi}}e^{-\frac{|D|^2}{2R_t}}. 
\end{align}
Consequently,
\begin{align}\label{error-bound-H2}
 \lim_{t\downarrow 0} \frac{|D|-H^X_D(t)}{\sqrt{R_t}}
=\sqrt{\frac{2}{\pi}}.
\end{align}
\end{theorem}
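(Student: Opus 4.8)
The plan is to compute $|D| - H_D^X(t)$ essentially exactly, exploiting the fact that in one dimension with $D=(a,b)$, the complement $D^c = (-\infty,a]\cup[b,\infty)$ splits the heat-loss integral into two pieces that can each be evaluated by a Gaussian integral. Writing $\sigma = \sqrt{R_t}$ and recalling $X_t \sim N(0,\sigma^2)$ under $\P$, we have
\begin{align*}
|D| - H_D^X(t) = \int_a^b \P_x(X_t \in D^c)\,\ud x = \int_a^b \bigl[\P(X_t \le a - x) + \P(X_t \ge b - x)\bigr]\ud x.
\end{align*}
Each term is of the form $\int_a^b \Phi(\,\cdot\,/\sigma)\,\ud x$ for the standard normal CDF $\Phi$, and by translating the variable and using Fubini (or integration by parts) one gets a closed expression involving $\sigma$, the normal density $\varphi$, and $\Phi$ evaluated at $\pm|D|/\sigma$ where $|D| = b-a$. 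The leading term comes from $2\sigma \E[(\text{half-normal})] = 2\sigma\cdot \frac{1}{\sqrt{2\pi}}$, which after combining the two boundary contributions yields $\sqrt{2R_t/\pi}$; the correction $\mathrm{Error}_H(t)$ is exactly the tail contribution that would be present if $D$ were all of $\R$ but is ``missing'' because $D$ is bounded.

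Concretely, I would first record the elementary identity, for any $c>0$,
\begin{align*}
\int_0^{c} \Phi\!\left(-\frac{u}{\sigma}\right)\ud u = \int_0^{c}\P(X_t \le -u)\,\ud u = \int_0^\infty \P(-X_t \ge u)\,\ud u - \int_c^\infty \P(-X_t \ge u)\,\ud u,
\end{align*}
the first integral being $\E[(-X_t)^+] = \E[(X_t)^+] = \sigma/\sqrt{2\pi}$ by symmetry, and the second being a genuinely positive quantity that I will bound. After applying this to both boundary terms (with $c = b-a = |D|$ in each, after the change of variables $u = x-a$ for the first and $u = b-x$ for the second), I obtain
\begin{align*}
|D| - H_D^X(t) = \sqrt{\frac{2R_t}{\pi}} - 2\int_{|D|}^\infty \P(X_t \ge u)\,\ud u,
\end{align*}
so $\mathrm{Error}_H(t) = 2\int_{|D|}^\infty \P(X_t\ge u)\,\ud u$, which is manifestly positive because $X_t$ has full support $\R$. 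This establishes the exact identity and the positivity half of \eqref{error-bound-H1}.

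For the upper bound on $\mathrm{Error}_H(t)$, I would use the standard Gaussian tail estimate $\P(X_t \ge u) = 1 - \Phi(u/\sigma) \le \frac{\sigma}{u\sqrt{2\pi}} e^{-u^2/(2\sigma^2)}$ for $u>0$, and then bound
\begin{align*}
2\int_{|D|}^\infty \P(X_t\ge u)\,\ud u \le \frac{2\sigma}{\sqrt{2\pi}}\int_{|D|}^\infty \frac{1}{u}\,e^{-u^2/(2\sigma^2)}\,\ud u \le \frac{2\sigma}{|D|\sqrt{2\pi}}\int_{|D|}^\infty e^{-u^2/(2\sigma^2)}\,\ud u,
\end{align*}
and finally $\int_{|D|}^\infty e^{-u^2/(2\sigma^2)}\,\ud u \le \frac{\sigma^2}{|D|}e^{-|D|^2/(2\sigma^2)}$ (again by the elementary inequality $\int_c^\infty e^{-u^2/(2\sigma^2)}\ud u \le \frac{\sigma^2}{c}e^{-c^2/(2\sigma^2)}$, obtained by multiplying and dividing by $u/\sigma^2$ and integrating). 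Substituting $\sigma^2 = R_t$ gives exactly the claimed bound $\frac{2R_t^{3/2}}{|D|^2\sqrt{2\pi}}e^{-|D|^2/(2R_t)}$. The consequence \eqref{error-bound-H2} then follows immediately: divide the exact identity by $\sqrt{R_t}$ and note that $\mathrm{Error}_H(t)/\sqrt{R_t} \le \frac{2R_t}{|D|^2\sqrt{2\pi}}e^{-|D|^2/(2R_t)} \to 0$ as $t\downarrow 0$, since $R_t \downarrow 0$ by \eqref{limitvar}.

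I do not anticipate a serious obstacle here; the argument is essentially a careful bookkeeping of one-dimensional Gaussian integrals. The only mild subtlety is keeping the translation-of-starting-point convention from \eqref{familyprobabilitymeasures} straight — namely that under $\P_x$ the process is $X + x$, so $\P_x(X_t \in D^c) = \P(X_t + x \in D^c) = \P(X_t \le a-x) + \P(X_t \ge b-x)$ — and then making sure that after the two changes of variables both boundary contributions really do produce the \emph{same} half-normal mean term, so that the two copies of $\sigma/\sqrt{2\pi}$ add to give the factor $\sqrt{2/\pi}$ rather than being double-counted or missed. Everything else is the chain of elementary Gaussian tail inequalities above.
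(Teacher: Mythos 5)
Your proposal is correct and follows essentially the same route as the paper: both reduce $|D|-H_D^X(t)$ to $2\int_0^{|D|}\P(X_t\ge u)\,\ud u$ (the paper via the centered substitution $u=b-x-c$ with $c=|D|/2$, you via the two separate substitutions $u=x-a$ and $u=b-x$, which yield the same thing by Gaussian symmetry), then split off the half-normal mean $2\E[X_t^+]=\sqrt{2R_t/\pi}$ and identify $\mathrm{Error}_H(t)=2\int_{|D|}^\infty\P(X_t\ge u)\,\ud u$. The tail bound is obtained by the same elementary Gaussian estimate, merely with a slightly different algebraic rearrangement (you peel off $1/u\le 1/|D|$ first, the paper multiplies through by $(u/x)^2\ge 1$), and both produce the identical constant.
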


\begin{proof}
Suppose $D=(a,b)$ with $-\infty<a<b<\infty$.
By changing variables via $u:=b-x-c$ with $c:=(b-a)/2$, 
\begin{align*}
	|D|-H^X_{D}(t)
	&=\int_a^b \left(\P_x(X_t\le a)+\P_x(X_t\ge b)\right)\ud x
	=\int_a^b \left(\P(X_t+x\le a)+\P(X_t+x\ge b)\right)\ud x\\
	&=\int_{-c}^c \left(\P(X_t\le u-c)+\P(X_t\ge u+c)\right)\ud u.
\end{align*}
Since $X_t$ has a zero-mean Gaussian distribution, $-X_t$ has the same distribution as $X_t$.  Therefore,
$
	\int_{-c}^c\P(X_t\le u-c)\ud u
	=\int_{-c}^c\P(-X_t\le u-c)\ud u
	=\int_{-c}^c\P(X_t\ge v+c)\ud v,
$
and hence, it follows that 
\[
	|D|-H^X_{D}(t)=2\int_{-c}^c \P(X_t\ge u+c)\ud u
	=2\int_0^{2c} \P(X_t\ge u)\ud u. 
\]
Since the distribution function $u\mapsto \P(Y\le u)$ of any random variable $Y$ is right-continuous, $\P(Y\le u)=\P(Y< u)$ for almost every $u$. Also, the equality $\{X_t>u\}=\{X_t\mathbf{1}_{\{X_t\ge 0\}}>u\}$ holds for $u\ge 0$, so  
\begin{align}\label{regularheat}
	|D|-H^X_{D}(t)
	=2\int_0^{2c} \P(X_t> u)\ud u
	=2\E[X_t\mathbf{1}_{\{X_t\ge 0\}}]-2\int_{2c}^{\infty} \P(X_t> u)\ud u.
\end{align}
The first term is calculated as 
\begin{align}\label{0004}
	\E[X_t\mathbf{1}_{\{X_t\ge 0\}}]
	=\int_0^\infty \frac{x}{\sqrt{2\pi R_t}}e^{-\frac{x^2}{2R_t}}\ud x
	=\sqrt{\frac{R_t}{2\pi}}, 
\end{align}
As for the second term, for any $u\ge x>0$, 
by the estimate 
$\P(Z>z)\le e^{-z^2/2}/(z\sqrt{2\pi})$
valid for a standard Gaussian variable $Z$ and any constant $z>0$ (see e.g., \cite[Theorem 1.2.6]{Durrett-5th}),
\begin{align*}
	\P(X_t> u)
	= \P\biggl(\frac{X_t}{\sqrt{R_t}}> \frac{u}{\sqrt{R_t}}\biggr)
	\le \frac{1}{\sqrt{2\pi}} \frac{\sqrt{R_t}}{u}e^{-\frac{u^2}{2R_t}}
	\le \biggl(\frac{u}{x}\biggr)^2\frac{1}{\sqrt{2\pi}}\frac{\sqrt{R_t}}{u}e^{-\frac{u^2}{2R_t}} 
	=\frac{\sqrt{R_t}}{x^2\sqrt{2\pi}}ue^{-\frac{u^2}{2R_t}}.\notag
\end{align*}
Integrating both sides from $x$ to $\infty$ yields 
$\int_{x}^{\infty}\P(X_t>u)\ud u
\le R_t^{3/2}e^{-x^2/(2R_t)}/({x^2\sqrt{2\pi}}).$
The latter inequality, together with \eqref{regularheat} and \eqref{0004}, yields the error bound in \eqref{error-bound-H1}, from which \eqref{error-bound-H2} follows immediately.
\end{proof}

\begin{remark}\label{remark:regular}
\begin{em}
A simple modification of the above proof leads to a more general statement on the regular heat content for a \textit{symmetric, zero-mean (but not necessarily Gaussian)} process $X$ in $\R$ 
with c\`adl\`ag paths such that $\E[X_t\mathbf{1}_{\{X_t\ge 0\}}]\to 0$ as $t\downarrow 0$.
Indeed, 
if the regular heat content is defined as in \eqref{0053},
and if 
$\int_{b-a}^\infty \P(X_t>u)\ud u=o(\E[X_t\mathbf{1}_{\{X_t\ge 0\}}])$
as $t\downarrow 0,$
then the symmetry of the distribution of $X$ yields expression \eqref{regularheat}, from which it follows that
\begin{align}\label{regular-general}
	\lim_{t\downarrow 0}\frac{|D|-H_{D}^X(t)}{\E[X_t\mathbf{1}_{\{X_t\ge 0\}}]} =2.  
\end{align}
Moreover, if $X$ is additionally assumed to be self-similar with index $H>0$, then 
expression \eqref{regular-general} can be rewritten as
$
\lim_{t\downarrow 0}t^{-H}(|D|-H_{D}^X(t))=2\E[X_1\mathbf{1}_{\{X_1\ge 0\}}]. 
$
In particular, this expression recovers the statement in \cite[Equation (1.10)]{Valverde2016} for a symmetric stable L\'evy process with stability index $\alpha\in(1,2)$, which is self-similar with index $H=1/\alpha$.
\end{em}
\end{remark}

\section{The short-time behavior of the spectral heat content}\label{section_results_spectral}

We now turn our attention to the \textit{spectral} heat content for general Gaussian processes. 
 We first establish an analogue of Theorem \ref{thm:GaussianMarkov_regular} in the one-dimensional case that comes with an associated error bound of exponential decay.

\begin{theorem}\label{thm:GaussianMarkov_spectral}
Let $D$ be a non-empty, bounded interval in $\R$.
Let $X=(X_t)_{t\in[0,1]}$ be a zero-mean Gaussian process in $\R$ with c\`adl\`ag paths starting at the origin under the probability measure $\P=\P_0$ such that 
the almost sure boundedness condition \eqref{a.s.boundedness} holds and $X_t$ is non-degenerate for all $t\in (0,1]$.
Then for any $t\in (0,1]$ satisfying $\mu_t<|D|/2$,  
\[
	|D|-Q_D^X(t)=2\mu_t- \mathrm{Error}_Q(t),
\]
where
\begin{align}\label{error-bound-Q1}
0<\mathrm{Error}_Q(t)
	\le \frac{4\sigma_t^2}{|D|-\mu_t} e^{-\frac 12\bigl(\frac{|D|-\mu_t}{\sigma_t}\bigr)^{2}}
		+\frac{4\sigma_t^2}{\frac 12|D|-\mu_t} e^{-\frac 12\bigl(\frac{\frac 12|D|-\mu_t}{\sigma_t}\bigr)^{2}}.
\end{align}
Consequently, 
\begin{align}\label{conclusion_spectral}
	\lim_{t\downarrow 0}\frac{|D|-Q_D^X(t)}{\mu_t} =2.  
\end{align}
\end{theorem}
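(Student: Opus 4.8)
The plan is to mimic the structure of the proof of Theorem~\ref{thm:GaussianMarkov_regular}, replacing the single-time variable $X_t$ with the running supremum $\sup_{s\in[0,t]}X_s$ and using the Borell--TIS inequality~\eqref{Borell-TIS} in place of the one-dimensional Gaussian tail bound. First I would rewrite the heat loss for the spectral heat content. With $c:=(b-a)/2=|D|/2$ and the change of variables $u:=b-x-c$,
\[
	|D|-Q^X_D(t)=\int_a^b \P_x(\tau^X_D\le t)\,\ud x
	=\int_{-c}^{c}\P\Bigl(\sup_{s\in[0,t]}X_s\ge u+c \ \text{ or } \ \inf_{s\in[0,t]}X_s\le u-c\Bigr)\ud u,
\]
using that the event $\{\tau^X_D\le t\}$, under $\P_x$, is exactly the event that $X$ (started at $0$) exits $(a-x,b-x)$ by time $t$. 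The crucial asymmetry compared with the RHC case is that now the running supremum and running infimum enter, and $\sup_{s\in[0,t]}(-X_s)=-\inf_{s\in[0,t]}X_s$ has the same law as $\sup_{s\in[0,t]}X_s$ since $-X$ is again a zero-mean Gaussian process with the same covariance. So by symmetry the two one-sided contributions have equal integrals, and I would next show the overlap (the event that the process exits through \emph{both} ends) is negligible, contributing only to the error term; this is where I expect the main technical work, since without a Markov or self-similarity assumption one cannot directly factor the two exit events.

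The way to handle the overlap and extract the leading term: write $M_t:=\sup_{s\in[0,t]}X_s$ and $m_t:=-\inf_{s\in[0,t]}X_s$, so $M_t,m_t\ge 0$ a.s.\ and both have mean $\mu_t$. By inclusion--exclusion,
\[
	\P(M_t\ge u+c)+\P(m_t\ge u-c)\cdot\mathbf{1}_{\{u<c\}}
\]
is not quite what appears; more carefully, for $u\in(-c,c)$ the event is $\{M_t\ge u+c\}\cup\{m_t\ge c-u\}$, and $\P$ of this union is at least $\P(M_t\ge u+c)$ and at most $\P(M_t\ge u+c)+\P(m_t\ge c-u)$. Integrating the lower bound over $u\in(-c,c)$ and using $\int_0^{2c}\P(M_t\ge v)\ud v=\E[M_t\wedge 2c]=\mu_t-\int_{2c}^\infty\P(M_t\ge v)\ud v$ (exactly as in~\eqref{regularheat}) gives the leading term $2\mu_t$ up to a tail $\int_{2c}^\infty\P(M_t\ge v)\ud v$; the Borell--TIS bound~\eqref{Borell-TIS}, valid once $2c=|D|>\mu_t$, controls this tail and, after an integration-by-parts-type estimate $\int_x^\infty e^{-(v-\mu_t)^2/(2\sigma_t^2)}\ud v\le \frac{\sigma_t^2}{x-\mu_t}e^{-(x-\mu_t)^2/(2\sigma_t^2)}$, yields a term matching the first summand in~\eqref{error-bound-Q1}. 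Integrating the upper-bound correction $\int_{-c}^c\P(m_t\ge c-u)\ud u=\int_0^{2c}\P(m_t\ge v)\ud v$ is \emph{not} small — it equals roughly $\mu_t$ — so I must instead bound the overlap $\{M_t\ge u+c\}\cap\{m_t\ge c-u\}$ directly: on this event $M_t+m_t\ge 2c=|D|$, so $\max(M_t,m_t)\ge |D|/2$, and
\[
	\int_{-c}^{c}\P\bigl(\{M_t\ge u+c\}\cap\{m_t\ge c-u\}\bigr)\ud u
	\le 2c\,\P\Bigl(\sup_{s\in[0,t]}X_s\ge \tfrac{|D|}{2}\Bigr)+(\text{a tail term}),
\]
and once $\mu_t<|D|/2$ the Borell--TIS inequality applied at level $|D|/2$ produces the second summand in~\eqref{error-bound-Q1}. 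Assembling: the lower bound gives $|D|-Q^X_D(t)\ge 2\mu_t-(\text{Borell--TIS tail at }|D|)$ and the upper bound gives $|D|-Q^X_D(t)\le 2\mu_t+(\text{overlap bound})$, so $\mathrm{Error}_Q(t)=2\mu_t-(|D|-Q^X_D(t))$ satisfies~\eqref{error-bound-Q1}; strict positivity of $\mathrm{Error}_Q(t)$ follows because the overlap event has positive probability (the support of the Gaussian vector is full) while we subtracted a strictly positive tail.

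Finally,~\eqref{conclusion_spectral} is immediate: by~\eqref{limitmeansup} and the fact that $\sigma_t\to 0$, $\mu_t\to 0$ as $t\downarrow 0$, so for all small enough $t$ we have $\mu_t<|D|/2$, and then $|D|-\mu_t\ge |D|/2$, $\tfrac12|D|-\mu_t\to \tfrac12|D|>0$, so $\mathrm{Error}_Q(t)=O(\sigma_t^2 e^{-c'/\sigma_t^2})=o(\mu_t)$ since $\mu_t$ decays only polynomially-or-slower relative to any $e^{-c'/\sigma_t^2}$ (more precisely $\sigma_t^2 e^{-c'/\sigma_t^2}/\mu_t\to 0$ because $\mu_t\ge$ const$\cdot\sqrt{R_t}$ by $\E[X_t\mathbf 1_{\{X_t\ge 0\}}]\le \E[M_t]=\mu_t$ and $\sigma_t\asymp\sqrt{R_t}$ near $0$ need not hold in general — instead one argues $\mu_t^2\le \E[M_t^2]\le C\sigma_t^2$ by Borell--TIS, so $e^{-c'/\sigma_t^2}/\mu_t\le e^{-c'/\sigma_t^2}/(c''\sigma_t)\to 0$). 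The main obstacle, to repeat, is the overlap estimate: decoupling the two exit events without Markovianity, which I resolve purely through the deterministic inequality $M_t+m_t\ge |D|$ on the overlap plus Borell--TIS.
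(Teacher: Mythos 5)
Your overall strategy matches the paper's: write $|D|-Q^X_D(t)$ via an inclusion--exclusion decomposition into the two one-sided exit contributions minus the overlap, use the symmetry $-X\stackrel{d}{=}X$ to identify the two one-sided integrals, and control the remainder with the Borell--TIS inequality. Your way of controlling the overlap — noting that on $\{M_t\ge u+c\}\cap\{m_t\ge c-u\}$ one has the deterministic inequality $M_t+m_t\ge 2c$, hence $\max(M_t,m_t)\ge |D|/2$ — is a nice minor variant of the paper's device, which instead observes that the overlap integrand $f_{c,t}(u)$ is an even function of $u$ and then bounds $f_{c,t}(u)\le\P(M_t\ge u+c)$ for $u\in[0,c]$. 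Both yield a Borell--TIS tail at level $|D|/2$, so this difference is cosmetic (your prefactor is $O(|D|)$ rather than $O(\sigma_t^2/|D|)$, but both are $o(\mu_t)$).

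There is, however, a genuine error in your closing argument that $\mathrm{Error}_Q(t)=o(\mu_t)$. You need an inequality of the form $\mu_t\ge c\,\sigma_t$ (so that $\sigma_t^2e^{-c'/\sigma_t^2}/\mu_t\le (\sigma_t/c)\,e^{-c'/\sigma_t^2}\to 0$), but the chain you propose, $\mu_t^2\le\E[M_t^2]\le C\sigma_t^2$, gives $\mu_t\lesssim\sigma_t$ — the wrong direction — and the step $e^{-c'/\sigma_t^2}/\mu_t\le e^{-c'/\sigma_t^2}/(c''\sigma_t)$ silently uses the reverse inequality $\mu_t\ge c''\sigma_t$ that you have not proved. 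Moreover the claim $\E[M_t^2]\le C\sigma_t^2$ is not true for a general Gaussian process: Borell--TIS (or Gaussian concentration) controls $\mathrm{Var}(M_t)\le\sigma_t^2$, so $\E[M_t^2]\le\mu_t^2+\sigma_t^2$, which gives nothing unless one already knows $\mu_t\lesssim\sigma_t$ — and in fact $\mu_t/\sigma_t$ need not be bounded. The paper closes this gap with Sudakov's minoration, producing $\mu_t\ge c_2\sqrt{R_{s_*(t)}}$ for a judiciously chosen $s_*(t)\in(0,t]$ with $\sigma_t^2<2R_{s_*(t)}$, whence $\sigma_t^2/\mu_t\to 0$. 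Your own observation $\mu_t\ge\E[X_t\mathbf 1_{\{X_t\ge 0\}}]=\sqrt{R_t/(2\pi)}$ is in fact one small step away from a valid (and Sudakov-free) fix: it holds with $t$ replaced by any $s\le t$, since $\sup_{r\le t}X_r\ge X_s\mathbf{1}_{\{X_s\ge0\}}$, so taking the supremum over $s\le t$ yields $\mu_t\ge\sigma_t/\sqrt{2\pi}$ directly. Finally, your ``assembling'' paragraph conflates upper and lower bounds; the clean version is the exact identity $|D|-Q^X_D(t)=2\mu_t-2I_4(t)-I_3(t)$ with both $I_4,I_3\ge 0$, from which the error bound and strict positivity read off directly.
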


\begin{proof}
Suppose $D=(a,b)$ with $-\infty<a<b<\infty$.
Observe that
\begin{align*}
	\P_x(\tau^{X}_D>t)
	=\P_x(a<X_s<b \ \textrm{for all} \ s\le t)
	=\P(a-x<X_s<b-x \ \textrm{for all} \ s\le t).
\end{align*}
Since $|D|-Q^{X}_D(t)=\int_a^b \P_x(\tau^X_D\le t)\ud x$, 
\begin{align*}
	|D|-Q^{X}_D(t)
	&=\int_a^b \P(\sup_{0\le s\le t} X_s\ge b-x)\ud x
		+\int_a^b \P(\inf_{0\le s\le t} X_s\le a-x)\ud x\\
	& \ \  \ \	-\int_a^b \P(\sup_{0\le s\le t} X_s\ge b-x,\, \inf_{0\le s\le t} X_s\le a-x)\ud x\\
	&=:I_1(t)+I_2(t)-I_3(t). 
\end{align*}
Since the distribution function $u\mapsto \P(Y\le u)$ of any random variable $Y$ is right-continuous, $\P(Y\le u)=\P(Y< u)$ for almost every $u$ with respect to the Lebesgue measure, and in particular, $\P(\sup_{0\le s\le t} X_s\ge u)=\P(\sup_{0\le s\le t} X_s> u)$ for almost every $u$ as well.
Thus, a simple change of variables yields
\[
	I_1(t)
	=\int_0^{b-a}\P(\sup_{0\le s\le t} X_s> u)\ud u
	=\mu_t-I_4(t),
\]
where
\[ 
	I_4(t):=\int_{b-a}^\infty \P(\sup_{0\le s\le t} X_s> u)\ud u.
\] 
On the other hand, the process $-X=(-X_t)_{t\ge 0}$ is Gaussian and has the same covariance function as $X$, so $-X$ and $X$ have the same distribution; therefore, 
\[
	I_2(t)
	=\int_a^b \P(\inf_{0\le s\le t} (-X_s)\le a-x)\ud x
	=\int_a^b \P(\sup_{0\le s\le t} X_s\ge x-a)\ud x
	=I_1(t).
\]
Thus, we have obtained
\begin{align}\label{estimate-01}
	|D|-Q^{X}_D(t)=2\mu_t-2I_4(t)-I_3(t).
\end{align}

With \eqref{limitmeansup} in mind, take $t>0$ small enough so that 
$\mu_t<c:=(b-a)/2$.
Let
\begin{align}\label{estimate-02}
	\mathrm{Error}_Q(t):=2I_4(t)+I_3(t).
\end{align}
We now show that $\mathrm{Error}_Q(t)$ has the upper bound appearing in \eqref{error-bound-Q1}.
As for $I_4(t)$, 
by the Borell--TIS inequality \eqref{Borell-TIS} and the inequality 
$\P(Z>z)\le e^{-z^2/2}/(z\sqrt{2\pi})$ valid for a standard Gaussian variable $Z$ and any constant $z>0$ (see e.g., \cite[Theorem 1.2.6]{Durrett-5th}), 
\begin{align}\label{estimate-I4}
	I_4(t)
	&\le 2\int_{2c}^\infty e^{-\frac{(u-\mu_t)^2}{2\sigma^2_t}}\ud u 
	= 2 \sigma_t\int_{\frac{2c-\mu_t}{\sigma_t}}^\infty e^{-z^2/2}\ud z
	\le \frac{2\sigma_t^2}{2c-\mu_t} e^{-\frac 12\bigl(\frac{2c-\mu_t}{\sigma_t}\bigr)^{2}}.  
\end{align}
On the other hand, in terms of $I_3(t)$, by the change of variables $u=b-x-c$, 
\[
	I_3(t)=\int_{-c}^c f_{c,t}(u)\ud u, \ \ \textrm{where} \ \ 
	f_{c,t}(u)
	=\P(\sup_{0\le s\le t}X_s\ge u+c,\, \inf_{0\le s\le t} X_s\le u-c). 
\] 
Since $-X$ is another Gaussian process having the same distribution as $X$, 
\begin{align*}
	f_{c,t}(u)
	&=\P(\sup_{0\le s\le t}(-X_s)\ge u+c,\, \inf_{0\le s\le t} (-X_s)\le u-c)
	=\P(-\inf_{0\le s\le t}X_s\ge u+c,\, -\sup_{0\le s\le t} X_s\le u-c)\\
	&=\P(\inf_{0\le s\le t}X_s\le -u-c,\, \sup_{0\le s\le t} X_s\ge -u+c)
	=f_{c,t}(-u).
\end{align*}
Hence, $f_{c,t}$ is an even function, and therefore,    
\begin{align}\label{estimate-04}
	I_3(t)
	=2\int_0^c f_{c,t}(u)\ud u
	\le 2\int_0^c \P(\sup_{0\le s\le t}X_s\ge u+c)\ud u
	\le 2\int_c^{\infty} \P(\sup_{0\le s\le t}X_s\ge v)\ud v. 
\end{align}
By \eqref{estimate-04} and part of the estimate in \eqref{estimate-I4} (with $2c$ replaced by $c$), 
\begin{align}\label{estimate-I3}
	I_3(t)
	\le \frac{4\sigma_t^2}{c-\mu_t} e^{-\frac 12\bigl(\frac{c-\mu_t}{\sigma_t}\bigr)^{2}}.
\end{align}
Combining \eqref{estimate-02}, \eqref{estimate-I4} and \eqref{estimate-I3}
gives the bound in \eqref{error-bound-Q1} valid for any small $t>0$ satisfying $\mu_t<c$.

Finally, to obtain \eqref{conclusion_spectral} from \eqref{error-bound-Q1}, we must verify that $\mathrm{Error}_Q(t)=o(\mu_t)$ as $t\downarrow 0$. To this end, observe first that the error bound in \eqref{error-bound-Q1} gives \begin{align}\label{error-bound-Q2}
	\mathrm{Error}_Q(t)\le C \sigma_t^2
\end{align}
for all small enough $t>0$, where $C>0$ is some constant independent of $t$. 
Next, for each small $t>0$, take $s_\ast(t)\in(0,t]$ such that 
$
	\sigma^2_t=\sup_{s\in[0,t]}R_s<2R_{s_\ast(t)}.
$
(For each $t>0$, such a number $s_\ast(t)$ exists since, otherwise, $\sigma^2_t\ge 2R_{s}$ for all $s\in(0,t]$, which would give $\sigma^2_t\ge 2\sigma^2_t$, and hence, $\sigma^2_t=0$, which is impossible since $X_t$ is assumed non-degenerate for all $t>0$.) 
Recall Sudakov's lower bound (see \cite[Theorem 10.5]{Lifshits_book2} and its proof, or \cite[Proposition 1.3]{Mishura2017_ExpectedMaxima}),
which states that for any zero-mean Gaussian random vector $(X_1,\ldots,X_n)$, 
\[
	\E\Bigl[\max_{1\le i\le n} X_i\Bigr]\ge c_n\min_{i\ne j} d(i,j),  
	\ \ \textrm{where} \ \  d(i,j):=\bigl(\E[|X_i-X_j|^2]\bigr)^{1/2}
\]
for some constant $c_n>0$. 
Applying this statement to the vector $(X_0,X_{s_\ast(t)})$ with $X_0=0$ yields
\begin{align*}
	\mu_t
	\ge \E\bigl[\max\{X_0,X_{s_\ast(t)}\}\bigr]
	\ge c_2(R_{s_\ast(t)})^{1/2}.
\end{align*}
Hence,  
\begin{align}\label{sigma-mu-relation}
	\frac{\sigma^2_t}{\mu_t}
	\le \frac{2R_{s_\ast(t)}}{c_2(R_{s_\ast(t)})^{1/2}}
	=\frac{2}{c_2}(R_{s_\ast(t)})^{1/2}, 
\end{align}
which approaches 0 since $s_\ast(t)\to 0$ as $t\downarrow 0$ due to the condition $s_\ast(t)\in(0, t]$.
The latter, together with \eqref{error-bound-Q2}, yields $\mathrm{Error}_Q(t)=o(\mu_t)$, as desired.
\end{proof}

\begin{remark}
\begin{em}
1) If $X$ is given by a Brownian motion $W$ on $\R$ with $\E_0[e^{-i \xi W_t}]=e^{-t\xi^2}$, then $R_t=\sigma^2_t=2t$ and $\mu_t$ is given by  \eqref{0006}, which implies that $\mathrm{Error}_Q(t)\to 0$ as $t\downarrow 0$.
Consequently, 
Theorem \ref{thm:GaussianMarkov_spectral} recovers the statement in \eqref{0031} when $d=1$. 

2) Theorem \ref{thm:GaussianMarkov_spectral} does not require that the Gaussian process $X$ be self-similar. 
In fact, the argument given in the above proof is significantly different from that given for a Brownian motion in \cite{Valverde2016}, which relies heavily on the self-similarity of the Brownian motion. 

3) As discussed in Remark \ref{remark:regular} for regular heat content, a simple modification of the above proof leads to a statement on the spectral heat content for 
more general \textit{symmetric, zero-mean (but not necessarily Gaussian)} processes $X$ in $\R$ 
with c\`adl\`ag paths and $\mu_t<\infty$ for all $t\in(0,1]$.
Indeed, 
suppose that the spectral heat content is defined as in \eqref{0053} and that $\int_{|D|/2}^\infty \P(\sup_{0\le s\le t}X_s>u)\ud u=o(\mu_t)$
as $t\downarrow 0$. 
Then the symmetry of the distribution of $X$ yields \eqref{estimate-01}, and one can also observe that $I_4(t)\le \int_{|D|/2}^\infty \P(\sup_{0\le s\le t}X_s>u)\ud u=o(\mu_t)$ and $I_3(t)=o(\mu_t)$ due to estimate \eqref{estimate-04}. Consequently, $\mathrm{Error}_Q(t)$ defined in \eqref{estimate-02} satisfies $\mathrm{Error}_Q(t)=o(\mu_t)$, from which \eqref{conclusion_spectral} follows.
In particular, under the additional assumption that $X$ is self-similar with index $H>0$,  \eqref{conclusion_spectral} can be rewritten as
\begin{align}\label{selfsimilar-shc}
		\lim_{t\downarrow 0}\frac{|D|-Q_{D}^X(t)}{t^H} =2\mu_1.  
\end{align}
This recovers the statement in \cite[Theorem 1.1(a)]{Valverde2016} for a symmetric stable process with stability index $\alpha\in(1,2)$.
\end{em}
\end{remark}

We now investigate the spectral heat content in a multi-dimensional setting, which requires a careful discussion. 
Our main result in this case is Theorem \ref{thm:GaussianMarkov_spectral_high}, which provides the exact small-time asymptotic behavior of the amount of heat loss $|D|-Q_D^{X}(t)$; however, the statement is given in a somewhat restricted setting. Instead, in Proposition \ref{prop:GaussianMarkov_spectral_upperboundonly} below, we establish an upper bound for the heat loss that is valid for a wide class of Gaussian processes and for any fixed $t>0$ small enough. 

In the remainder of the paper, we use the notation $X^{j}=(X^{j}_t)_{t\in [0,1]}$ to denote the $j$th component of a process $X=(X_t)_{t\in[0,1]}$ in $\R^d$, and $x^{j}$ to denote the $j$th component of any given vector $x\in \R^d$. 
In addition, for each component $X^j$, we write $\mu_{j,t}=\E[\sup_{s\in[0,t]}X_s^{j}]$ and  $\sigma^2_{j,t}=\sup_{s\in[0,t]}\E[(X^j_t)^2]$.
Recall the notations $\delta_D(x)$ and $D_a$ defined in \eqref{def:Da}.

\begin{proposition}\label{prop:GaussianMarkov_spectral_upperboundonly}
Let $D$ be a non-empty, bounded, connected $C^{1,1}$ open set in $\R^d$  with $d\ge 2$ with characteristic radius $R$.
Let $X=(X_t)_{t\in [0,1]}=(X_t^1,\ldots,X_t^d)_{t\in [0,1]}$ be a stochastic process in $\R^d$ whose components are zero-mean Gaussian processes with c\`adl\`ag paths starting at 0, expected running supremums $\mu_{1,t},\ldots,\mu_{d,t}$, and the supremums $\sigma^2_{1,t},\ldots,\sigma^2_{d,t}$ of the variance functions under the probability measure $\P=\P_0$ such that 
the almost sure boundedness condition \eqref{a.s.boundedness} holds and $X_t$ is non-degenerate for all $t\in(0,1]$.
 Then for any $t\in(0,1]$ and $a\in (0,R/2]$ with $\max_{1\le j\le d}\mu_{j,t}<a/\sqrt{d}$, 
 \begin{align}\label{spectral.upper1}
	|D|-Q_D^{X}(t)
	\le d^{3/2} 2^{d}|\partial D|\overline{\mu}_t+4d|D_a| \sum_{j=1}^d 
	\exp\left(-\frac{(\frac{a}{\sqrt{d}}-\mu_{j,t})^2}{2\sigma_{j,t}^2}\right),
 \end{align}
where 
\[
\overline{\mu}_t=\frac{1}{d}\sum_{j=1}^d \mu_{j,t}.
\]
 Consequently,
\begin{align}\label{spectral.upper2}
  \limsup_{t\downarrow 0} \frac{|D|-Q^X_D(t)}{\overline{\mu}_t}
	\le d^{3/2} 2^{d} |\partial D|. 
\end{align}
\end{proposition}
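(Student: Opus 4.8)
The plan is to bound the heat loss $|D|-Q_D^X(t)=\int_D \P_x(\tau^X_D\le t)\,\ud x$ by splitting the domain into the boundary layer $D\setminus D_a$ and the interior core $D_a$. On the boundary layer, we simply estimate $\P_x(\tau^X_D\le t)\le 1$, so its contribution is at most $|D\setminus D_a|=|D|-|D_a|$. Using the $C^{1,1}$ structure of $D$ one expects $|D|-|D_a|\le |\partial D|\cdot a\cdot(\text{const})$, but here the factor $a$ must be traded for $\mu_t$, so the layer width $a$ itself is not yet chosen: rather, we will ultimately take $a$ comparable to $\mu_t\sqrt d$ (or keep it as a free parameter with the constraint $\mu_t<a/\sqrt d$, as in the statement). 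The key point is that on the interior core $D_a$, a particle starting at $x$ with $\delta_D(x)>a$ can exit $D$ only if $\sup_{s\le t}|X_s-X_0|\ge a$ in Euclidean norm, hence $\max_{1\le j\le d}\sup_{s\le t}|X^j_s|\ge a/\sqrt d$ for at least one component. By a union bound over the $d$ i.i.d.\ components, symmetry of each Gaussian component ($X^j\overset{d}{=}-X^j$), and the Borell--TIS inequality \eqref{Borell-TIS} applied to $\sup_{s\le t}X^j_s$, this probability is at most $2d\cdot 2 e^{-(a/\sqrt d-\mu_t)^2/(2\sigma_t^2)}=4d\,e^{-(a/\sqrt d-\mu_t)^2/(2\sigma_t^2)}$, valid precisely when $\mu_t<a/\sqrt d$. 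Integrating over $D_a$ contributes $4d|D_a|e^{-(a/\sqrt d-\mu_t)^2/(2\sigma_t^2)}$, which is the second term in \eqref{spectral.upper1}.

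For the boundary-layer term, I would invoke the $C^{1,1}$ geometry the way it enters \eqref{eqn:vdBD89}--\eqref{eqn:vdBD89-b}. Writing $D\setminus D_a=\{x\in D: \delta_D(x)\le a\}$ and using the coarea/layer-cake formula $|D\setminus D_a|=\int_0^a |\partial D_s|\,\ud s\le a\cdot\sup_{0<s\le a}|\partial D_s|\le 2^{d-1}|\partial D|\,a$ by \eqref{eqn:vdBD89-b}, we get $|D|-|D_a|\le 2^{d-1}|\partial D|\,a$. (Here one should be slightly careful that $\delta_D$ is Lipschitz with constant $1$ so $|\partial D_s|$ is the relevant surface measure; this is exactly what the cited \cite[Lemma 6.7]{vandenBerg1989} machinery provides.) To match the first term $d^{3/2}2^d|\partial D|\mu_t$ in \eqref{spectral.upper1}, one then takes $a$ as small as the constraint allows, essentially $a=\sqrt d\,\mu_t$ up to a factor; with $a$ slightly larger than $\sqrt d\,\mu_t$ the layer bound $2^{d-1}|\partial D|a$ becomes $\le 2^{d-1}|\partial D|\cdot\sqrt d\,\mu_t\cdot 2 = d^{1/2}2^d|\partial D|\mu_t$. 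The extra factor $d$ (giving $d^{3/2}$ rather than $d^{1/2}$) is slack in the estimate and simply makes room for the constant to be clean; I would not fight for the sharpest constant. Note the proposition is stated with general $a\in(0,R/2]$ satisfying $\mu_t<a/\sqrt d$, so in fact the inequality \eqref{spectral.upper1} should be read as holding for each such $a$, and \eqref{spectral.upper2} follows by first letting $t\downarrow 0$ with $a$ fixed and then letting $a\downarrow 0$.

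Finally, for the asymptotic statement \eqref{spectral.upper2}: fix any $a\in(0,R/2]$. By \eqref{limitmeansup} we have $\mu_t\to 0$, so for all small $t$ the constraint $\mu_t<a/\sqrt d$ holds, and also $\sigma_t^2\to 0$ by \eqref{limitvar}, hence the exponential factor $e^{-(a/\sqrt d-\mu_t)^2/(2\sigma_t^2)}\to 0$ faster than any power of $\mu_t$ — indeed faster than $\mu_t$ itself, since $\sigma_t^2/\mu_t\to 0$ by the Sudakov-type bound \eqref{sigma-mu-relation} established in the proof of Theorem \ref{thm:GaussianMarkov_spectral}, so $(a/\sqrt d)^2/\sigma_t^2$ dominates $\log(1/\mu_t)$. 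Therefore $\limsup_{t\downarrow 0}(|D|-Q_D^X(t))/\mu_t\le d^{3/2}2^d|\partial D|\cdot(a/(\sqrt d\,\mu_t))\cdot(\text{the way }a\text{ enters})$; more cleanly, rewrite \eqref{spectral.upper1} dividing by $\mu_t$, send $t\downarrow 0$ to kill the second term, obtaining $\limsup\le d^{3/2}2^d|\partial D|$ directly from the first term after choosing $a$ appropriately (or, if \eqref{spectral.upper1} is genuinely for fixed $a$, divide the layer bound $2^{d-1}|\partial D|a$ by $\mu_t$, which blows up — so the correct reading is that $a$ must be allowed to shrink with $t$, namely $a_t:=\min\{R/2,\,2\sqrt d\,\mu_t\}$, giving the clean limsup). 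The main obstacle I anticipate is precisely this bookkeeping of how the free parameter $a$ is coupled to $\mu_t$ and to the localization radius $R$: one needs $a\le R/2$ for the $C^{1,1}$ estimates and $a>\sqrt d\,\mu_t$ for Borell--TIS, and the two terms in \eqref{spectral.upper1} pull $a$ in opposite directions, so the art is choosing $a=a_t$ decaying like $\mu_t$ (legal since $\mu_t\to 0<R/2$ eventually) so that the layer term is $O(\mu_t)$ with the stated constant and the interior term is $o(\mu_t)$.
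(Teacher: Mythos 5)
Your interior estimate is exactly the paper's: for $x\in D_a$, $\P_x(\tau_D^X\le t)\le\P(\tau^X_{B(0,\delta_D(x))}\le t)\le 2d\,\P(\sup_{s\le t}X^1_s\ge \delta_D(x)/\sqrt d)\le 4d\,e^{-(a/\sqrt d-\mu_t)^2/(2\sigma_t^2)}$ via ball inclusion, the bound $|y|\le\sqrt d\max_j|y^j|$, a union bound, symmetry, and Borell--TIS. But your treatment of the boundary layer $D\setminus D_a$ has a genuine gap: you bound $\P_x(\tau_D^X\le t)\le 1$ there and then estimate $|D\setminus D_a|\le 2^{d-1}|\partial D|\,a$. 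This gives a first term of size $2^{d-1}|\partial D|\,a$, not $d^{3/2}2^d|\partial D|\mu_t$, so it does not prove the pointwise inequality \eqref{spectral.upper1}, in which $a$ and $\mu_t$ appear as two genuinely independent parameters. You then try to repair this by taking $a=a_t\sim\sqrt d\,\mu_t$, but this breaks the interior term: with $a_t=2\sqrt d\,\mu_t$, the exponent becomes $\mu_t^2/(2\sigma_t^2)$, and by the Sudakov lower bound the paper only has $\mu_t^2/\sigma_t^2\ge c_2^2/2$ bounded \emph{below}, not tending to infinity (for Brownian motion $\mu_t^2/\sigma_t^2=2/\pi$ is constant). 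Hence $e^{-(a_t/\sqrt d-\mu_t)^2/(2\sigma_t^2)}$ stays bounded away from $0$, $|D_{a_t}|\to|D|$, and the interior term divided by $\mu_t$ blows up. So your scheme proves neither \eqref{spectral.upper1} nor \eqref{spectral.upper2}.

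The fix — which is what the paper does — is to keep the $\delta_D(x)$-dependent estimate $\P_x(\tau_D^X\le t)\le 2d\,\P(\sup_{s\le t}X^1_s\ge\delta_D(x)/\sqrt d)$ on the boundary layer as well, then use the coarea decomposition
\[
\int_{D\setminus D_a}\P\Bigl(\sup_{s\le t}X^1_s\ge\tfrac{\delta_D(x)}{\sqrt d}\Bigr)\,\ud x
=\int_0^a\P\Bigl(\sup_{s\le t}X^1_s\ge\tfrac{r}{\sqrt d}\Bigr)|\partial D_r|\,\ud r
\le 2^{d-1}|\partial D|\int_0^\infty\P\Bigl(\sup_{s\le t}X^1_s\ge\tfrac{r}{\sqrt d}\Bigr)\,\ud r
=\sqrt d\,2^{d-1}|\partial D|\,\mu_t.
\]
Because the integrand decays in $r$, extending the integral to $(0,\infty)$ costs nothing and produces $\mu_t$ directly, \emph{independently of $a$}. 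Multiplying by $2d$ gives the first term $d^{3/2}2^d|\partial D|\mu_t$ for every fixed $a\in(0,R/2]$, so \eqref{spectral.upper1} holds pointwise, and \eqref{spectral.upper2} follows by fixing $a$ and using $\sigma_t^2/\mu_t\to 0$ (via \eqref{sigma-mu-relation} and $e^{-x}<x^{-1}$) to kill the interior term. In short: the missing idea is not to choose $a$ cleverly, but to avoid the crude bound $\P\le 1$ on the boundary layer.
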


\begin{proof}
Let $B(b,r)$ denote the open ball of radius $r$ centered at $b$. Then for any fixed $x\in D$, since $D\supset B(x,\delta_{D}(x))$, it follows that
$
	\P_x(\tau_D^X\le t)
	\le \P_x(\tau_{B(x,\delta_{D}(x))}^X\le t)
	=\P(\tau_{B(0,\delta_{D}(x))}^X\le t).
$
On the other hand, since $\max_{1\leq j\leq d}|x^{j}|\leq |x|=(\sum_{j=1}^{d}(x^{j})^2)^{1/2}\leq \sqrt{d}\max_{1\leq j\leq d}|x^{j}|$ for any $x=(x^1,\ldots, x^d)\in \R^{d}$, for any fixed $r>0$, 
\begin{align*}
	\P(\tau^X_{B(0,r)}\le t)
	&=\P(\sup_{0\le s\le t} |X_s|\ge r)
	\le \P\biggl(\sqrt{d}\sup_{0\le s\le t} \max_{1\le j\le d} |X^{j}_s|\ge r\biggr)
	\le \P\biggl(\bigcup_{j=1}^d \biggl\{\sup_{0\le s\le t}   |X^{j}_s|\ge \frac{r}{\sqrt{d}}\biggr\}\biggr)\\
	&\le \sum_{j=1}^d \P\biggl(\sup_{0\le s\le t}   |X^{j}_s|\ge \frac{r}{\sqrt{d}}\biggr)
	=2\sum_{j=1}^d  \P\biggl(\sup_{0\le s\le t}   X^{j}_s\ge \frac{r}{\sqrt{d}}\biggr),
\end{align*}
where the last equality follows from the symmetry of the process $X^{j}$. Combining the above two observations yields  
\[
	|D|-Q_D^{X}(t)
	=\int_D \P_x(\tau_D^X\le t)\,\ud x
	\le 2\sum_{j=1}^d  \int_D\P\biggl(\sup_{0\le s\le t}   X^{j}_s\ge \frac{\delta_{D}(x)}{\sqrt{d}}\biggr)\,\ud x.
\]
For any $a\in (0,R/2]$, with \eqref{eqn:vdBD89-b} in mind, split the latter integral into $\int_{D\setminus D_a}$ and $\int_{D_a}$, where $D_a$ is defined in \eqref{def:Da}.
In terms of the first integral,  
\begin{align*}
	I_{1,j}(t)
	&:=\int_{D\setminus D_a}\P\biggl(\sup_{0\le s\le t}   X^{j}_s\ge \frac{\delta_{D}(x)}{\sqrt{d}}\biggr)\,\ud x
	=\int_0^a \P\biggl(\sup_{0\le s\le t}   X^{j}_s\ge \frac{r}{\sqrt{d}}\biggr)|\partial D_r|\,\ud r\\
	&\le 2^{d-1}|\partial D|\int_0^\infty \P\biggl(\sup_{0\le s\le t}   X^{j}_s\ge \frac{r}{\sqrt{d}}\biggr)\,\ud r
	=\sqrt{d}\hspace{1pt} 2^{d-1}|\partial D|\mu_{j,t},
\end{align*}
where $D_r$ is defined as in \eqref{def:Da}.
In terms of the second integral, note that $\delta_{D}(x)> a$ for all $x\in D_a$, and hence, for any small $t$ satisfying $\mu_{j,t}<a/\sqrt{d}$ (so that $\delta_{D}(x)/\sqrt{d}> a/\sqrt{d}>\mu_{j,t}$), 
the Borell--TIS inequality \eqref{Borell-TIS} yields
\begin{align*}
	I_{2,j}(t)
	&:=\int_{D_a} \P\biggl(\sup_{0\le s\le t}   X^{j}_s\ge \frac{\delta_{D}(x)}{\sqrt{d}}\biggr)\,\ud x
	\le 2\int_{D_a} \exp\left(-\frac{(\frac{\delta_{D}(x)}{\sqrt{d}}-\mu_{j,t})^2}{2\sigma_{j,t}^2}\right)\,\ud x\\
	&\le 2|D_a| \exp\left(-\frac{(\frac{a}{\sqrt{d}}-\mu_{j,t})^2}{2\sigma_{j,t}^2}\right).
\end{align*}
Combining the above estimates yields \eqref{spectral.upper1} for any small $t$ satisfying $\max_{1\le j\le d}\mu_{j,t}<a/\sqrt{d}$. 

Moreover, by the trivial inequality $e^{-x}< x^{-1}$ valid for $x>0$ and the inequality $0<\mu_{j,t}/(d\cdot \overline{\mu}_t) \le 1$, it follows that 
	\[
		\frac{1}{\overline{\mu}_t}\sum_{j=1}^d \exp\left(-\frac{(\frac{a}{\sqrt{d}}-\mu_{j,t})^2}{2\sigma_{j,t}^2}\right)
		\le \sum_{j=1}^d \frac{2}{(\frac{a}{\sqrt{d}}-\mu_{j,t})^2} \cdot \frac{\sigma_{j,t}^2}{\mu_{j,t}}\cdot \frac{\mu_{j,t}}{\overline{\mu}_t}
		\le \sum_{j=1}^d \frac{2d}{(\frac{a}{\sqrt{d}}-\mu_{j,t})^2} \cdot \frac{\sigma_{j,t}^2}{\mu_{j,t}},
	\]
which approaches 0 as $t\downarrow 0$ due to the observation in \eqref{sigma-mu-relation} applied to each component. This, together with  \eqref{spectral.upper1}, yields \eqref{spectral.upper2}. 
\end{proof}

In the special case where the components of the process $X$ are i.i.d.\ with common expected running supremum $\mu_t$, Proposition \ref{prop:GaussianMarkov_spectral_upperboundonly} yields the upper bound  
$\limsup_{t\downarrow 0} (|D|-Q^X_D(t))/\mu_t
	\le d^{3/2} 2^{d} |\partial D|.$  
As for the lower bound, combining Theorem \ref{thm:GaussianMarkov_regular_high} with the inequality in \eqref{Q-and-H} yields
$
	|\partial D|/\sqrt{2\pi}
 	\le \liminf_{t\downarrow 0} (|D|-Q^X_D(t))/\sqrt{R_t},
$
where $R_t$ is the common variance function;
however, this is not an ideal expression since the one-dimensional case in Theorem \ref{thm:GaussianMarkov_spectral} indicates that an appropriate rate function should be the common expected running supremum $\mu_t$.
In contrast, Theorem \ref{thm:GaussianMarkov_spectral_high} below establishes the exact asymptotic behavior with rate function $\mu_t$ by imposing some additional assumptions.

In the remainder of the paper, let $\D([0,1];\R)$ denote the space of all c\`adl\`ag functions on $[0,1]$ taking values in $\R$. The space is equipped with the standard Skorokhod $J_1$ topology, under which $x_n=(x_n(t))_{t\in [0,1]}\to x=(x(t))_{t\in [0,1]}$ as $n\to \infty$ if and only if there exists a sequence of 
strictly increasing bijections $\lambda_n:[0,1]\to [0,1]$
such that 
\begin{align}\label{def:J1}
	\sup_{t\in[0,1]}| \lambda_n(t)-t| \to 0 \ \ \ \textrm{and} \ \ \ \sup_{t\in[0,1]} |x_n(\lambda_n(t))-x(t)| \to 0 
\end{align}
as $n\to\infty$. Convergence in the $J_1$ topology coincides with convergence in the uniform topology if $x_n$ and $x$ belong to the space $C([0,1];\R)$ of continuous functions on $[0,1]$. For details about the $J_1$ topology, see e.g., \cite{Whitt}.
Unlike Proposition \ref{prop:GaussianMarkov_spectral_upperboundonly}, which gives an upper bound only for the asymptotic behavior of $|D|-Q^X_D(t)$ in a general setting, the next result establishes the exact asymptotic behavior in a restricted setting with an assumption concerning weak convergence.

\begin{theorem}\label{thm:GaussianMarkov_spectral_high}
Let $D$ be a non-empty, bounded, connected $C^{1,1}$ open set in $\R^d$  with $d\ge 2$. 
Let $X=(X_t)_{t\in[0,1]}=(X_t^1,\ldots,X_t^d)_{t\in [0,1]}$ be a stochastic process in $\R^d$ whose components are i.i.d.\ zero-mean Gaussian processes with c\`adl\`ag paths starting at 0 and common expected running supremum $\mu_t$ under the probability measure $\P=\P_0$ 
such that the almost sure boundedness condition \eqref{a.s.boundedness} holds and $X_t$ is non-degenerate for all $t\in (0,1]$.
For each $t\in (0,1]$, define a scaled process ${Y}^{(t)}=({Y}^{(t)}_{u})_{u\in [0,1]}$ of $X^{1}$  by
${Y}_{u}^{(t)}=\mu_t^{-1}X_{tu}^{1}.$
If $Y^{(t)}$ converges weakly  as $t\downarrow 0$ to a continuous Gaussian process $Y=(Y_s)_{s\in[0,1]}$ in the space $\D([0,1];\R)$ equipped with the Skorokhod $J_1$ topology, then 
$
	\E[\sup_{s\in[0,1]}Y_s]\le 1,
$ 
and
\begin{align}\label{spectral.lower1}
	 \lim_{t\downarrow 0} \frac{|D|-Q^X_D(t)}{\mu_t}
	= |\partial D|\cdot  \E[\sup_{s\in[0,1]} Y_s].
\end{align}
\end{theorem}

The proof of Theorem \ref{thm:GaussianMarkov_spectral_high}, which requires a series of lemmas and technical arguments, is postponed to Section \ref{proof}.

\begin{remark}
\begin{em}
 If the Gaussian process $X$ in Theorem \ref{thm:GaussianMarkov_spectral_high}  is continuous and self-similar with index $H>0$, then 
$Y^{(t)}=^\mathrm{d}Y^{(1)}$ for all $t\in(0,1]$, and in particular, the weak convergence of $Y^{(t)}$ to $Y:=Y^{(1)}$ trivially follows. 
Moreover, the expectation appearing on the right-hand side of \eqref{spectral.lower1} simplifies to 1; i.e.,
 $
 	\E[\sup_{s\in[0,1]}Y_s]=\E[\sup_{s\in[0,1]}Y_s^{(1)}]=1. 
 $
As a result, \eqref{spectral.lower1} takes the form
$
	 \lim_{t\downarrow 0} t^{-H}(|D|-Q^X_D(t))
	= |\partial D|\cdot \mu_1,
$
 which coincides with \eqref{selfsimilar-shc} when $d=1$. 
An example of a non-self-similar Gaussian process to which Theorem \ref{thm:GaussianMarkov_spectral_high} is applicable is provided in Example \ref{example:time-changedBM} in Section \ref{section_examples}.
\end{em}
\end{remark}

\section{Applications}\label{section_examples}

The theorems stated in the previous sections are applicable to a wide class of Gaussian processes and immediately yield a number of new statements, some of which are stated below. This section is divided into two subsections; the first subsection focuses on the one-dimensional case with the domain $D$ being a non-empty open interval, and the second provides non-self-similar Gaussian processes in $\R^d$ with $d\ge 2$ to which Theorem \ref{thm:GaussianMarkov_spectral_high} is applicable.

\subsection{Examples of Gaussian processes in $\R$}

Throughout this subsection, $D$ is a non-empty, bounded interval in $\R$.

\begin{example}
\begin{em}
 Let $X$ be a bi-fractional Brownian motion $B^{H,K}=(B^{H,K}_t)_{t\in [0,1]}$ with parameters $H\in(0,1)$ and $K\in(0,1]$, which has covariance function 
$
	R^{H,K}(s,t)=2^{-K}((s^{2H}+t^{2H})^K-|s-t|^{2HK}). 
$
Note that the special case when $K=1$ yields a fractional Brownian motion with Hurst index $H\in(0,1)$.
The bi-fractional Brownian motion has the variance function $R_t=t^{2HK}$ and is self-similar with index $HK$ (see e.g., \cite{Houdre2003_bifbm}), so  
\[
	\mu_t=t^{HK}\mu_1(H,K), \ \ \textrm{where} \ \ \mu_1(H,K):=\E\biggl[\sup_{s\in[0,1]}B^{H,K}_s\biggr]. 
\] 
(Note that even when $K=1$, the value of $\mu_1(H,K)$ is unknown except when $H=1/2$.)
Thus, by Theorems \ref{thm:GaussianMarkov_regular_high} and \ref{thm:GaussianMarkov_spectral}, 
\[
	\lim_{t\downarrow 0}\frac{|D|-H_{D}^{B^{H,K}}(t)}{t^{HK}} =\sqrt{\frac{2}{\pi}}
	\ \ \ \textrm{and} \ \ \  
	\lim_{t\downarrow 0}\frac{|D|-Q_D^{B^{H,K}}(t)}{t^{HK}} =2\mu_1(H,K).
\]
\end{em}
\end{example}

\begin{example}
\begin{em}
 Let $X$ be a fractional Ornstein--Uhlenbeck process defined by 
\[
	U^H_t:=e^{-at} \int_0^t e^{as}\ud B^H_s
\]
with $a>0$, where the integral is understood as the pathwise Riemann--Stieltjes integral with respect to a factional Brownian motion $B^H$ with Hurst index $H> 1/2$, as discussed in \cite{Cheridito2003}. By \cite[Equation (3.1)]{Mishura2020_fOU}, which builds upon \cite{Cheridito2003}, 
the variance function of $U^H$ is given by 
\[
	R_t
	=\E[(U^H_t)^2]
	=H(2H-1)e^{-2at} \iint_{[0,t]^2} e^{a(u+v)}|u-v|^{2H-2}\ud u\ud v. 
\]
By the change of variables $u=tx$ and $v=ty$,
\[
	R_t=2H(2H-1)t^{2H} \iint_{0\le x<y\le 1} e^{a(x+y-2)t}(y-x)^{2H-2}\ud x \ud y.
\]
Since $x+y-2<0$ whenever $0\le x<y\le 1$, the integrand increases as $t\downarrow 0$, so by the monotone convergence theorem, 
\[
	\lim_{t\downarrow 0} \frac{R_t}{t^{2H}}=2H(2H-1)\iint_{0\le x<y\le 1} (y-x)^{2H-2}\ud x \ud y=1.
\]
Hence, $R_t\sim t^{2H}$ as $t\downarrow 0$.
By Theorems \ref{thm:GaussianMarkov_regular_high} and \ref{thm:GaussianMarkov_spectral}, 
\[
	\lim_{t\downarrow 0}\frac{|D|-H_{D}^{U^H}(t)}{t^{H}} =\sqrt{\frac{2}{\pi}}
	\ \ \ \textrm{and} \ \ \ 
	\lim_{t\downarrow 0}\frac{|D|-Q_D^{U^H}(t)}{\mu_t} =2, 
\]
where the asymptotic behavior of $\mu_t$ is unknown as far as the authors know. 
Note that a similar argument can be made for a standard (non-fractional) Ornstein--Uhlenbeck process, in which case, the variance function satisfies
$R_t=(1-e^{-2at})/(2a) \sim t$ as $t\downarrow 0.$
\end{em}
\end{example}

\begin{example}
\begin{em}
 Following \cite{Mishura2017_ExpectedMaxima}, consider a continuous zero-mean Gaussian process $X$ for which there exist some constants $C_1,C_2>0$ and $H_1,H_2\in(0,1)$ such that
\begin{align}\label{quasi-helix0}
		C_1|s-r|^{H_1}\le \bigl(\E[|X_s-X_r|^2]\bigr)^{1/2} \le C_2|s-r|^{H_2}
\end{align}
for all $r,s\in [0,1]$. The process is called \textit{quasi-helix} if $H_1=H_2$. Examples of quasi-helix processes with $C_1< C_2$ include bi-fractional Brownian motions and sub-fractional Brownian motions. Condition \eqref{quasi-helix0} implies that 
\begin{align}\label{quasi-helix1}
	C_1 t^{H_1} \le \sqrt{R_t}\le C_2 t^{H_2}.
\end{align}
 For each fixed $t\in (0,1]$, define a continuous zero-mean Gaussian process $V^{(t)}=(V^{(t)}_s)_{s\in [0,1]}$ by letting $V_s^{(t)}:=X_{st}$. Then $\mu_t=\E[\sup_{r\in[0,t]}X_r]=\E[\sup_{s\in[0,1]}V^{(t)}_s]$ and 
\[
		C^{(t)}_1 |s-r|^{H_1}\le 
		\bigl(\E[|V^{(t)}_s-V^{(t)}_r|^2]\bigr)^{1/2}
		\le C^{(t)}_2 |s-r|^{H_2}
\]
for all $s,r\in[0,1]$, where $C^{(t)}_i=C_it^{H_i}$, $i=1,2$. Therefore, application of \cite[Theorem 1]{Mishura2017_ExpectedMaxima} for each fixed $t\in (0,1]$ yields
$
	C^{(t)}_1/(5\sqrt{H_1})\le \E[\sup_{s\in[0,1]}V^{(t)}_s] \le 16.3 C^{(t)}_2/\sqrt{H_2}.
$
In other words, 
\begin{align}\label{quasi-helix2}
	\frac{C_1}{5\sqrt{H_1}}t^{H_1}\le \mu_t \le \frac{16.3 C_2}{\sqrt{H_2}}t^{H_2}.
\end{align}
Theorems \ref{thm:GaussianMarkov_regular_high} and \ref{thm:GaussianMarkov_spectral},
together with \eqref{quasi-helix1} and \eqref{quasi-helix2}, now give information about the asymptotic behaviors of $H_D^{X}(t)$ and $Q_D^{X}(t)$, respectively. 
\end{em}
\end{example}

\begin{example}\label{example:time-changedBM-1dimension}
\begin{em}
Let a stochastic process $X$ be a Brownian motion $B$ (with $\E[B_t^2]=t$) time-changed by a deterministic, continuous, non-decreasing function $\alpha:[0,1]\to [0,\infty)$ with $\alpha(0)=0$ and $\alpha(t)>0$ for $t\in(0,1]$; i.e., $X_t=B_{\alpha(t)}$ for $t\in[0,1]$. 
For example, $X$ can be taken to be an It\^o integral of a deterministic integrand due to L\'evy's characterization theorem: 
\[
	X_t=\int_0^t f(s)\ud \tilde{B}_s =^{\textrm{d}} B_{\alpha(t)}, \ \ \ \textrm{where} \ \ \alpha(t)=[X]_t = \int_0^t f^2(s)\ud s.
\]
The process $X$ is a continuous Gaussian process that is not necessarily self-similar.
For $t\in(0,1]$, observe that 
$
	R_t = \E[(B_{\alpha(t)})^2] = \alpha(t),
$ 
while 
since $\alpha$ is continuous, non-decreasing, and starting at 0, 
\begin{align}\label{time-changedBM-mu_t}
	\mu_t = \E[\sup_{s\in[0,t]}B_{\alpha(s)}] = \E[\sup_{u\in[0,1]}B_{\alpha(t)u}]
	=\sqrt{\alpha(t)}\E[\sup_{u\in[0,1]}B_{u}]
	=\sqrt{\frac{2\alpha(t)}{\pi}},
\end{align}
where the last equality follows from e.g., \cite[Problem 2.8.2]{KaratzasShreve}.
By Theorems \ref{thm:GaussianMarkov_regular_high} and \ref{thm:GaussianMarkov_spectral}, 
\[
	\lim_{t\downarrow 0}\frac{|D|-H_{D}^{B\circ \alpha}(t)}{\sqrt{\alpha(t)}} =\sqrt{\frac{2}{\pi}}
	\ \  \ \textrm{and} \ \ \ 
	\lim_{t\downarrow 0}\frac{|D|-Q_D^{B\circ \alpha}(t)}{\sqrt{\alpha(t)}} =2\sqrt{\frac{2}{\pi}}. 
\]
\end{em}
\end{example}

\subsection{Examples of Gaussian processes in $\R^d$ with $d\ge 2$}

This subsection provides non-self-similar Gaussian processes in $\R^d$ with $d\ge 2$ to which Theorem \ref{thm:GaussianMarkov_spectral_high} is applicable.
We first derive a lemma that gives sufficient conditions for the weak convergence of $Y^{(t)}$ to a Gaussian process $Y$, as stated in Theorem \ref{thm:GaussianMarkov_spectral_high}.

\begin{lemma}\label{lemma:weakconvergence0}
Let $X=(X_t)_{t\in[0,1]}=(X_t^1,\ldots,X_t^d)_{t\in [0,1]}$ be a stochastic process in $\R^d$ as in Theorem \ref{thm:GaussianMarkov_spectral_high}.
Assume that there exists a continuous function $g$ on $[0,1]^2$ such that for each $s,r\in[0,1]$, 
\begin{align}\label{scaledcovariance}
	\frac{R_{X^{1}}(ts,tr)}{\mu_t^2}\to g(s,r) \  \ \textrm{as} \ \ t\downarrow 0,
\end{align}
 where $R_{X^{1}}(s,r)=\E[X^{1}_sX^{1}_r]$. Assume further that there exist a constant $\kappa>1$ and a continuous, nondecreasing function $h$ on $[0,1]$ such that for any $t\in(0,1]$ and $0\le r<s<u\le 1$, the scaled moment condition 
\begin{align}\label{momentcondition}
	\frac{1}{\mu_t^4}\E\Bigl[(X^{1}_{tu}-X^{1}_{ts})^2(X^{1}_{ts}-X^{1}_{tr})^2\Bigr]\le (h(u)-h(r))^\kappa
\end{align}
holds. 
Then as $t\downarrow 0$, $Y^{(t)}$ defined in Theorem \ref{thm:GaussianMarkov_spectral_high} converges weakly to a Gaussian process $Y$ with covariance function $R_Y(s,r)=g(s,r)$ in the space $\D([0,1];\R)$ equipped with the Skorokhod $J_1$ topology.
\end{lemma}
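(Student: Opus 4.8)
The plan is to establish the weak convergence of $Y^{(t)}$ to the Gaussian process $Y$ via the standard two-step scheme for convergence in the Skorokhod space $\D([0,1];\R)$: first identify the limiting finite-dimensional distributions, and then prove tightness. Since each $Y^{(t)}$ is a zero-mean Gaussian process (being a deterministic rescaling of the Gaussian process $X^1$), its law is completely determined by its covariance function $u\mapsto R_{X^1}(tu,tv)/\mu_t^2$. By assumption \eqref{scaledcovariance}, this covariance converges pointwise on $[0,1]^2$ to the continuous function $g$; since convergence of covariances of zero-mean Gaussian vectors entails convergence in distribution, every finite-dimensional distribution of $Y^{(t)}$ converges to the corresponding Gaussian law with covariance $g$. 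It remains to check that $g$ is itself a valid covariance function: positive semidefiniteness is inherited in the limit from the positive semidefiniteness of each $R_{X^1}(t\,\cdot,t\,\cdot)/\mu_t^2$. This identifies the candidate limit $Y$ as the zero-mean Gaussian process with covariance $R_Y = g$, and continuity of the sample paths of $Y$ will follow once tightness is established (or, alternatively, directly from a Kolmogorov-type continuity estimate implied by \eqref{momentcondition} passed to the limit).

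Next I would prove tightness of $\{Y^{(t)}\}_{t\in(0,1]}$ in $\D([0,1];\R)$ as $t\downarrow 0$. The initial values are pinned: $Y^{(t)}_0 = X^1_0/\mu_t = 0$, so the family is trivially tight at the origin. For the oscillation control, I would invoke the classical moment criterion for tightness in the Skorokhod space (of Chentsov--Billingsley type, e.g.\ Theorem 13.5 in Billingsley or the version for $\D$ in \cite{Whitt}): it suffices to exhibit a finite measure and an exponent such that $\E[|Y^{(t)}_u - Y^{(t)}_s|^\beta |Y^{(t)}_s - Y^{(t)}_r|^\beta] \le (\nu(r,u])^{1+\eta}$ for $r<s<u$, uniformly in $t$. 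Taking $\beta=2$, the left-hand side is exactly $\mu_t^{-4}\,\E[(X^1_{tu}-X^1_{ts})^2(X^1_{ts}-X^1_{tr})^2]$, which by hypothesis \eqref{momentcondition} is bounded by $(h(u)-h(r))^\kappa$ with $\kappa>1$. Since $h$ is continuous and nondecreasing, it induces a finite nonatomic measure on $[0,1]$, and $\eta := \kappa-1>0$, so the criterion applies with a bound uniform in $t$. This yields tightness, and combined with the convergence of finite-dimensional distributions, it gives $Y^{(t)}\Rightarrow Y$ in $\D([0,1];\R)$ with the $J_1$ topology.

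Finally, I would address the continuity of the limit $Y$. Because the convergence is in $J_1$ and the limit's finite-dimensional distributions are those of a Gaussian process satisfying a Kolmogorov continuity bound inherited from \eqref{momentcondition} (sending $t\downarrow 0$ in the moment estimate, the left side converges to $\E[(Y_u-Y_s)^2(Y_s-Y_r)^2]$, which is then $\le (h(u)-h(r))^\kappa$), the process $Y$ admits a continuous modification; and since a tight limit in $\D$ whose finite-dimensional laws charge only continuous trajectories must in fact live on $C([0,1];\R)$, the limit $Y$ is continuous. This matches the hypothesis required by Theorem \ref{thm:GaussianMarkov_spectral_high}, so the lemma is consistent with and directly feeds into that theorem.

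I expect the main obstacle to be the tightness argument --- specifically, making sure the moment criterion for $\D$-tightness is invoked in the correct form. Unlike tightness in $C([0,1];\R)$, tightness in the Skorokhod space requires controlling the \emph{modulus of continuity adapted to $J_1$} (the $w''$-modulus), and the two-increment bound \eqref{momentcondition} with a nonatomic dominating measure $h$ is precisely what is needed to bound it; one must be careful that the exponent $\kappa>1$ (so $\eta>0$) and that $h$ being continuous rules out fixed jump times, which is what prevents the limit from having jumps and thereby also delivers path continuity of $Y$. The finite-dimensional convergence step, by contrast, is essentially immediate from \eqref{scaledcovariance} and the Gaussian structure, requiring only the observation that pointwise convergence of covariance matrices forces weak convergence of the associated centered Gaussian vectors.
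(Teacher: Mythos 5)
Your approach is essentially identical to the paper's: finite-dimensional convergence via pointwise convergence of covariance kernels together with the Cram\'er--Wold device, followed by tightness via the two-increment moment criterion in \cite[Theorem 13.5]{Billingsley_CoPM_2nd} with $\beta=2$, $2\alpha=\kappa>1$, and dominating function $h$. That much is correct and matches the paper step for step.

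There is, however, one genuine omission. Billingsley's Theorem~13.5 in the $\D[0,1]$ setting requires, in addition to the finite-dimensional convergence and the moment bound, a condition controlling behavior near the right endpoint $t=1$ (equivalently, that the limit process has no fixed discontinuity at $1$). The paper verifies this by noting that continuity of the covariance $g$ implies, via \cite[Theorem 8.12]{Jansen}, that $Y$ is $L^2$-continuous, whence $Y_1-Y_{1-\delta}\to^{\mathrm d}0$ as $\delta\downarrow 0$, which is exactly the needed endpoint hypothesis. Your proof checks only $Y^{(t)}_0=0$ (the left endpoint, which is not the problematic one) and never addresses $t=1$; the discussion of path continuity of $Y$ at the end of your argument is offered as a \emph{consequence} of the established weak convergence, whereas the endpoint continuity is actually a \emph{prerequisite} for invoking Theorem~13.5 in the first place. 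This is a real gap, though easily repaired by inserting the $L^2$-continuity argument before applying Billingsley.

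A smaller point: you propose deducing $\E[(Y_u-Y_s)^2(Y_s-Y_r)^2]\le (h(u)-h(r))^\kappa$ by ``sending $t\downarrow 0$'' in \eqref{momentcondition}. That step needs a uniform-integrability (or explicit Gaussian fourth-moment) justification, since convergence in distribution alone does not transfer moment bounds. The lemma as stated does not actually claim path continuity of $Y$ (that is assumed separately in Theorem~\ref{thm:GaussianMarkov_spectral_high}), so this is ancillary, but if you keep it you should close that step.
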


\begin{proof}
Observe that $Y^{(t)}$ is a zero-mean Gaussian process in $\R$ with covariance function 
$
	R_{Y^{(t)}}(s,r)= \mu_t^{-2}R_{X^{1}}(ts,tr),
$
 which converges to $g(s,r)$ as $t\downarrow 0$ due to assumption \eqref{scaledcovariance}. 
In particular, since the function $(s,r)\mapsto R_{Y^{(t)}}(s,r)$ is nonnegative definite for each fixed $t$, so is the limiting function $(s,r)\mapsto g(s,r)$; thus, the latter is the covariance function of some Gaussian process $Y=(Y_s)_{s\in[0,1]}$ in $\R$. Now, the convergence of the covariance function as $t\downarrow 0$ implies the weak convergence of any linear combination of the form $\sum_{k=1}^n a_k Y^{(t)}_{s_k}$ to $\sum_{k=1}^n a_k Y_{s_k}$, where $0\le s_1<s_2<\cdots<s_n\le 1$ and $a_1,\ldots,a_n\in\R$. Therefore, by the Cram\'er--Wold device (see e.g., \cite[Theorem 3.10.6]{Durrett-5th}), it follows that 
$Y^{(t)}\to^{\textrm{f.d.d.}} Y$ as $t\downarrow 0,$
where the symbol $\to^{\textrm{f.d.d.}}$ means the convergence of finite-dimensional distributions. 
Thus, it now suffices to verify the tightness of the family $\{Y^{(t)}: t\in(0,1]\}$ to obtain the desired weak convergence in the Skorokhod space. 

Note that since the covariance function $g(s,r)$ is continuous, due to \cite[Theorem 8.12]{Jansen}, the Gaussian process $Y$ is continuous in $L^2$,
and in particular,
$Y_1-Y_{1-\delta} \to^{\mathrm{d}} 0$ as $\delta\downarrow 0.$
On the other hand, the scaled moment condition \eqref{momentcondition} implies that 
$
	\E\bigl[(Y^{(t)}_{u}-Y^{(t)}_{s})^2(Y^{(t)}_{s}-Y^{(t)}_{r})^2\bigr]\le (h(u)-h(r))^\kappa
$
for any $t\in (0,1]$ and $0\le u<s<u\le 1$. 
Therefore, by \cite[Theorem 13.5]{Billingsley_CoPM_2nd}, $Y^{(t)}\to^\mathrm{d} Y$ in the space $\D([0,1];\R)$ equipped with the Skorokhod $J_1$ topology.  
\end{proof}

\begin{remark}
\begin{em}
In Lemma \ref{lemma:weakconvergence0}, assumption \eqref{momentcondition} can be replaced by any condition guaranteeing the tightness of the family $\{Y^{(t)}: t\in (0,1]\}$. For example, if $X$ has continuous paths (and hence, weak convergence is discussed on the $C$ space instead), then \eqref{momentcondition} can be replaced by the simpler moment condition
$
	\mu_t^{-2}\E\bigl[(X^{1}_{ts}-X^{1}_{tr})^2\bigr]\le (h(s)-h(r))^\kappa;
$
see \cite[Theorem 11.6.5]{Whitt}.
\end{em}
\end{remark}

We are now ready to give an example of a class of non-self-similar Gaussian processes that satisfy assumptions  \eqref{scaledcovariance} and \eqref{momentcondition} in Lemma \ref{lemma:weakconvergence0} (so the weak convergence of $Y^{(t)}$ occurs, and consequently, Theorem \ref{thm:GaussianMarkov_spectral_high} is applicable.).

\begin{example}\label{example:time-changedBM}
\begin{em}
Let $X=(X^1,\ldots,X^d)$ be a stochastic process in $\R^d$ with i.i.d.\ components that are equal in distribution to the time-changed Brownian motion discussed in Example \ref{example:time-changedBM-1dimension}; i.e., $(X^{1}_t)_{t\in[0,1]}=^{\textrm{d}}(B_{\alpha(t)})_{t\in [0,1]}$. 
The process $X^1$ is a continuous Gaussian process that is not necessarily self-similar.
Assume that the deterministic time change $\alpha(t)$ is regularly varying at 0 with index $\rho>0$, which satisfies $\alpha(ts)/\alpha(t)\to s^\rho$ as $t\downarrow 0$ for any fixed $s\in [0,1]$ (see \cite[Chapter 1]{Bingham_book}). Then for $s, r\in[0,1]$, by \eqref{time-changedBM-mu_t}, 
\[
	\frac{R_{X^{1}}(ts,tr) }{\mu_t^2}
	=\frac{\pi\E[B_{\alpha(ts)}B_{\alpha(tr)}]}{2\alpha(t)}
	=\frac{\pi (\alpha(ts) \wedge \alpha(tr))}{2\alpha(t)} \to \frac{\pi(s\wedge r)^\rho}{2}
\]
as $t\downarrow 0$,
so \eqref{scaledcovariance} holds with $g(s,r)=(\pi/2)(s\wedge r)^\rho$. 

On the other hand, for $t\in(0,1]$ and $0\le r<s<u\le 1$, since $\alpha$ is non-decreasing, it follows that 
$
	\E\bigl[(X^{1}_{tu}-X^{1}_{ts})^2(X^{1}_{ts}-X^{1}_{tr})^2\bigr] 
	=\E\bigl[(B_{\alpha(tu)}-B_{\alpha(ts)})^2(B_{\alpha(ts)}-B_{\alpha(tr)})^2\bigr] 
	=(\alpha(tu)-\alpha(ts))(\alpha(ts)-\alpha(tr)) 
	\le (\alpha(tu)-\alpha(tr))^2.
$
Hence, by \eqref{time-changedBM-mu_t},
\[
		\frac{1}{\mu_t^4}\E\Bigl[(X^{1}_{tu}-X^{1}_{ts})^2(X^{1}_{ts}-X^{1}_{tr})^2\Bigr] 
		\le \frac{\pi^2}{4}\Bigl(\frac{\alpha(tu)-\alpha(tr)}{\alpha(t)}\Bigr)^2.
\]
(Or, since this is a continuous process, we could instead discuss the quantity 
$
\E\bigl[(X^{1}_{tu}-X^{1}_{tr})^4\bigr] 
=\E\bigl[(B_{\alpha(tu)}-B_{\alpha(tr)})^4\bigr] 
=3(\alpha(tu)-\alpha(tr))^2.
$)
Although the latter approaches $(\pi^2/4)(u^\rho-r^\rho)^2$,
we rather need a uniform bound valid for all $0\le r<u\le 1$ and for all small $t$. To achieve this, 
suppose additionally that $\alpha(t)$ has representation 
 $
 	\alpha(t)=t^\rho \ell(t)
 $
in terms of a slowly varying function $\ell(t)$ at 0 satisfying the following condition: there exist constants $0<m<M<\infty$ and a continuous, non-decreasing function $H$ such that for all $t\in(0,1]$,
 \[
 	m\le \ell(t)\le M \ \ \textrm{and} \ \ 
 	\ell(tu)-\ell(tr)\le H(u)-H(r) \ \ \textrm{for all} \ \  0\le r<u\le 1.
\] 
(The second condition is satisfied if $\ell$ has a derivative with positive upper bound on $(0,1]$ due to the mean value theorem.)
Then for $0\le r<u\le 1$, 
\[
	0\le \frac{\alpha(tu)-\alpha(tr)}{\alpha(t)}
	=\frac{u^\rho(\ell(tu)-\ell(tr))+\ell(tr)(u^\rho-r^\rho)}{\ell(t)}
	\le \frac{h(u)-h(r)}{m},
\]
where $h(u):=H(u)+Mu^\rho$. 
As a result, 
 \[
		\frac{1}{\mu_t^4}\E\Bigl[(X^{1}_{tu}-X^{1}_{ts})^2(X^{1}_{ts}-X^{1}_{tr})^2\Bigr] 
		\le \frac{\pi^2}{4m^2}((h(u)-h(r))^2,
\]
so \eqref{momentcondition} is satisfied with $\kappa=2$. 

Now, by Lemma \ref{lemma:weakconvergence0}, $Y^{(t)}$ defined in Theorem \ref{thm:GaussianMarkov_spectral_high} converges weakly to a Gaussian process $Y$ with covariance function $R_Y(s,r)=(\pi/2)(s\wedge r)^\rho$ in the space $\D([0,1];\R)$.
Furthermore, for any $0\le r<s\le 1$,  
$\E[|Y_s-Y_r|^2]=(\pi/2)(s^\rho-r^\rho) \le (\pi/2)\rho(s-r)$ if $\rho>1$, while  $\E[|Y_s-Y_r|^2]\le (\pi/2)(s-r)^\rho$ if $0<\rho\le 1$,
where we used the mean value theorem when $\rho>1$ and the sub-additivity of $x\mapsto x^\rho$ when $0<\rho\le 1$. In either case, for some $\delta>0$, the latter upper bound is dominated above by $(\log(s-r))^{-2}$ as long as $0<s-r<\delta$. Hence, criterion (1.4.4) in \cite[Theorem 1.4.1]{AdlerTaylor_book} is satisfied, and consequently, $Y$ is a continuous Gaussian process, as stated in Theorem \ref{thm:GaussianMarkov_spectral_high}. Thus, we can now apply Theorem \ref{thm:GaussianMarkov_spectral_high} to obtain the asymptotic statement in \eqref{spectral.lower1}.

Examples of time changes $\alpha(t)$ that satisfy the conditions assumed above include $\alpha(t)=t^\rho\log(t+c)$ with $c>1$, and
$\alpha(t)=\sum_{k=0}^n c_k t^{\eta_k}$ with $c_0> 0$, $c_k\ge 0$ for all $k\ge 1$, and $0< \eta_0<\eta_1<\eta_2<\cdots<\eta_n$. In fact, for the latter time change, one can take $\rho=\eta_0$, $m=c_0$, $M=\sum_{k=0}^n c_k$, and $H(u)=\sum_{k=0}^n c_k u^{\eta_k-\eta_0}$.
\end{em}
\end{example}

\section{Proof of Theorem \ref{thm:GaussianMarkov_spectral_high}}\label{proof}

Throughout this section, we assume that $X$, $Y$, $Y^{(t)}$, and $D$ are those appearing in the statement of Theorem \ref{thm:GaussianMarkov_spectral_high}. 
In particular, recall that $X^1=(X^1_t)_{t\in[0,1]}$ denotes the first component of the process $X=(X^1,\ldots,X^d)$ and that ${Y}_{u}^{(t)}=\mu_t^{-1}X_{tu}^{1}.$
The first lemma guarantees that the right-hand side of \eqref{spectral.lower1} is finite. 

\begin{lemma}\label{lemma:finiteness}
$\E[\sup_{s\in[0,1]}Y_s]\le 1$. 
\end{lemma}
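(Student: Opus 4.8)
The plan is to deduce $\E[\sup_{s\in[0,1]}Y_s]\le 1$ from the weak convergence $Y^{(t)}\to Y$ together with the defining normalization $\E[\sup_{s\in[0,1]}X^{1}_{ts}/\mu_t]=1$ for every $t\in(0,1]$. Indeed, by the definition of $\mu_t$ and the scaling in \eqref{def:Xtilde-0},
\[
	\E\Bigl[\sup_{s\in[0,1]}Y^{(t)}_s\Bigr]
	=\frac{1}{\mu_t}\E\Bigl[\sup_{s\in[0,1]}X^{1}_{ts}\Bigr]
	=\frac{1}{\mu_t}\E\Bigl[\sup_{s\in[0,t]}X^{1}_{s}\Bigr]
	=\frac{\mu_t}{\mu_t}=1
\]
for all $t\in(0,1]$. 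So the statement reduces to passing this identity to the limit, and the only subtlety is that the supremum functional $x\mapsto \sup_{s\in[0,1]}x(s)$ on $\D([0,1];\R)$ is continuous in the $J_1$ topology (this is standard — composition with a reparametrization does not change the supremum, so it is in fact $J_1$-continuous everywhere, not merely at continuous paths), hence $\sup_{s\in[0,1]}Y^{(t)}_s\to^{\mathrm d}\sup_{s\in[0,1]}Y_s$ by the continuous mapping theorem, but convergence in distribution alone does not give convergence of expectations without uniform integrability.

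The key step is therefore to supply uniform integrability of the family $\{\sup_{s\in[0,1]}Y^{(t)}_s:t\in(0,1]\}$, which I would obtain from the Borell--TIS inequality \eqref{Borell-TIS} applied to the Gaussian process $Y^{(t)}$. Since $Y^{(t)}$ is a zero-mean Gaussian process with $\E[\sup_s Y^{(t)}_s]=1$ and variance-of-supremum parameter $(\sigma^{(t)})^2:=\sup_{s\in[0,1]}R_{Y^{(t)}}(s,s)=\sigma_t^2/\mu_t^2$, inequality \eqref{Borell-TIS} gives $\P(\sup_{s}Y^{(t)}_s>x)\le 2e^{-(x-1)^2/(2(\sigma^{(t)})^2)}$ for $x>1$. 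By \eqref{sigma-mu-relation}, $(\sigma^{(t)})^2=\sigma_t^2/\mu_t^2\le \frac{2}{c_2}(R_{s_\ast(t)})^{1/2}\cdot\frac{1}{\mu_t}$; more simply, since $\mu_t\to 0$ forces (via the same Sudakov bound) $\sigma_t^2/\mu_t^2\to 0$ along with $\sigma_t/\mu_t\le\frac{2}{c_2}(R_{s_\ast(t)})^{1/2}\cdot\sigma_t/(\mu_t\cdot\text{(lower bd)})$ — the cleanest route is just to note $(\sigma^{(t)})^2=\sigma_t^2/\mu_t^2$ and, by the argument already run in the proof of Theorem~\ref{thm:GaussianMarkov_spectral} producing \eqref{sigma-mu-relation}, this ratio is bounded by a constant for $t$ small. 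A uniform bound $(\sigma^{(t)})^2\le C_0$ for all small $t$ yields the tail estimate $\P(\sup_s Y^{(t)}_s>x)\le 2e^{-(x-1)^2/(2C_0)}$, uniformly in $t$, which dominates a single integrable tail and hence gives uniform integrability of the positive parts $(\sup_s Y^{(t)}_s)^+$. The negative parts are controlled trivially since $\sup_s Y^{(t)}_s\ge Y^{(t)}_0=0$, so the random variables are nonnegative. Uniform integrability plus convergence in distribution then gives $\E[\sup_s Y_s]=\lim_t \E[\sup_s Y^{(t)}_s]=1\le 1$; in fact one gets equality under these hypotheses, but only the inequality is claimed.

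Concretely, the steps in order: (i) record the identity $\E[\sup_{s\in[0,1]}Y^{(t)}_s]=1$; (ii) show $\sup_{s\in[0,1]}\cdot$ is $J_1$-continuous and invoke the continuous mapping theorem to get $\sup_s Y^{(t)}_s\to^{\mathrm d}\sup_s Y_s$; (iii) use Borell--TIS \eqref{Borell-TIS} for $Y^{(t)}$ together with the uniform bound on $\sigma_t^2/\mu_t^2$ (from the Sudakov argument behind \eqref{sigma-mu-relation}) to get a $t$-uniform Gaussian-type tail bound, hence uniform integrability; (iv) combine (ii)--(iii) to pass the expectation to the limit, concluding $\E[\sup_s Y_s]=1\le 1$. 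The main obstacle is step (iii): one must be careful that the Borell--TIS parameters $\mu_t^{(Y^{(t)})}=1$ and $(\sigma^{(t)})^2$ behave well, i.e. that $\sigma_t^2/\mu_t^2$ stays bounded as $t\downarrow 0$ — this is exactly the content of the estimate \eqref{sigma-mu-relation} already established, so there is no real gap, merely the need to reuse that bound and check that Fatou's lemma alone (which would only give $\E[\sup_s Y_s]\le\liminf_t\E[\sup_s Y^{(t)}_s]=1$) already suffices for the stated inequality, making the uniform-integrability argument optional if one is content with $\le$.

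\medskip

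\noindent\textbf{Remark on a shortcut.} In fact, since the functional $x\mapsto\sup_{s\in[0,1]}x(s)$ is $J_1$-continuous and nonnegative on paths started at $0$, the portmanteau theorem / Fatou's lemma for weak convergence gives immediately
\[
	\E\Bigl[\sup_{s\in[0,1]}Y_s\Bigr]
	\le\liminf_{t\downarrow 0}\E\Bigl[\sup_{s\in[0,1]}Y^{(t)}_s\Bigr]
	=1,
\]
so the claim of Lemma~\ref{lemma:finiteness} follows at once; the uniform-integrability refinement is only needed if one later wants the matching lower bound (equality).
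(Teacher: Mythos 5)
Your ``shortcut'' is essentially the paper's own proof: the paper realizes Fatou's lemma for weak convergence by hand, truncating the supremum at level $n$, using the continuous mapping theorem on the bounded $J_1$-continuous functional $x\mapsto\sup_{s}x(s)\wedge n$ to send $t\downarrow 0$ for each fixed $n$ and $\eps$, and then letting $n\to\infty$ via the ordinary Fatou's lemma. The two differ only in whether one cites the lower-semicontinuity form of the portmanteau theorem or re-derives it by truncation. Your primary argument --- uniform integrability via the Borell--TIS inequality applied to $Y^{(t)}$ --- is a genuinely different and stronger route: it gives $\E[\sup_{s\in[0,1]}Y_s]=1$, not merely $\le 1$, and it is sound once one has a $t$-uniform bound on $(\sigma^{(t)})^2=\sigma_t^2/\mu_t^2$. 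The paper records exactly this bound, $\sigma_t^2/\mu_t^2\le 2/c_2^2$, inside the proof of Lemma~\ref{lemma:half-space} (``a simple modification of the argument used to obtain \eqref{sigma-mu-relation}''), and you correctly arrive at the same fact.

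One caution on your intermediate steps: the assertion ``$\mu_t\to 0$ forces $\sigma_t^2/\mu_t^2\to 0$'' is not correct --- the Sudakov argument gives $\sigma_t^2/\mu_t\to 0$ and only boundedness of $\sigma_t^2/\mu_t^2$ --- and the displayed bound $\sigma_t^2/\mu_t^2\le \tfrac{2}{c_2}(R_{s_\ast(t)})^{1/2}/\mu_t$ does not by itself yield uniform boundedness since $1/\mu_t\to\infty$. The clean route, which you eventually adopt, is $\sigma_t^2/\mu_t^2\le 2R_{s_\ast(t)}/(c_2^2 R_{s_\ast(t)})=2/c_2^2$. These slips do not invalidate your conclusion because you ultimately use the correct bounded-ratio statement. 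Since the lemma claims only the one-sided inequality, the Fatou route suffices; the uniform-integrability refinement proves equality, which the paper does not need here.
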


\begin{proof}
For each fixed $t>0$, observe that
$
	\E[\sup_{s\in[0,1]}Y_s^{(t)}]=\mu_t^{-1}\E[\sup_{s\in[0,1]}X_{ts}^{1}]=1.
$
For fixed $\eps>0$ and $n\ge 1$, the weak convergence $(Y^{(t)}_s)_{s\in[0,1]}\to^\mathrm{d} (Y_s)_{s\in[0,1]}$ in the space $\D([0,1];\R)$ implies that $\E[\sup_{s\in[0,1]} Y_s^{(t)}\wedge n]\to \E[\sup_{s\in[0,1]} Y_s\wedge n]$ as $t\downarrow 0$ (as the supremum and minimum are both continuous functions on the Skorokhod space), so there exists $t_0(n)=t_0(n,\eps)>0$ such that 
$
	\bigl|\E[\sup_{s\in[0,1]} Y_s^{(t_0(n))}\wedge n]- \E[\sup_{s\in[0,1]} Y_s\wedge n]\bigr|<\eps.
$
By Fatou's lemma, 
\begin{align*}
	\E[\sup_{s\in[0,1]} Y_s]
	&\le \liminf_{n\to\infty} \E[\sup_{s\in[0,1]} Y_s\wedge n]
	\le \liminf_{n\to\infty} \left(\E[\sup_{s\in[0,1]} Y_s^{(t_0(n))}\wedge n]+\eps\right)\\
	&\le \liminf_{n\to\infty} \left(\E[\sup_{s\in[0,1]} Y_s^{(t_0(n))}]+\eps\right)
	= 1+\eps.
\end{align*}
Letting $\eps\downarrow 0$ completes the proof. 
\end{proof}

The next lemma shows that the weak convergence of $Y^{(t)}$ in Theorem \ref{thm:GaussianMarkov_spectral_high} implies the weak convergence of a scaled version of the process $X$ in $\R^d$. Here, the reader should be alerted that  two kinds of the $J_1$ topology can be considered in the multi-dimensional setting. 
One is the product topology on the Cartesian product $\prod_{j=1}^d \D([0,1];\R):=\D([0,1];\R)\times \cdots \times \D([0,1];\R)$,
with which the convergence $x_n=(x^1_n,\ldots,x^d_n)\to x=(x^1,\ldots,x^d)$ is equivalent to the convergence $x^j_n \to x^j$ in $\D([0,1];\R)$ for each $1\le j\le d$. The other is the stronger (finer) topology that is obtained by using the maximum norm of $\R^d$, $\|a\|:=\max_{1\le j\le d} |a^j|$, in \eqref{def:J1}. Below, the $J_1$ topology for the space $\D([0,1];\R^d)$ means this stronger topology. In addition, $C([0,1];\R^d)$ denotes the space of $\R^d$-valued continuous functions on $[0,1]$.

\begin{lemma}\label{lemma:weakconvergence}
For each $t\in (0,1]$, define a scaled process $\widetilde{X}^{(t)}=(\widetilde{X}^{(t)}_{u})_{u\in [0,1]}$ of $X$  by
$\widetilde{X}_{u}^{(t)}=\mu_t^{-1}X_{tu}$. 
Then as $t\downarrow 0$, $\widetilde{X}^{(t)}$ converges weakly to $Z$ in $\D([0,1];\R^d)$ equipped with the Skorokhod $J_1$ topology, where $Z$ is a process whose components are independent copies of the Gaussian process $Y$. 
\end{lemma}

\begin{proof}
Express $\widetilde{X}^{(t)}$ as
$
	\widetilde{X}^{(t)}=(\widetilde{X}^{(t)1},\ldots, \widetilde{X}^{(t)d}), 
$  
where $\widetilde{X}^{(t)j}$'s are i.i.d.\ and have the same distribution as $Y^{(t)}$.  By assumption, for each fixed $j$, $\widetilde{X}^{(t)j}$ converges weakly to a continuous process $Z^{j}$ in $\D([0,1];\R)$ with the $J_1$ topology, where $Z^j$'s have the same distribution as $Y$. Since $\widetilde{X}^{(t)j}$'s are independent, so are $Z^j$'s, and one can consider the process
$
	Z=(Z^{1},\ldots, Z^{d})
$
on a single probability space. Since $\widetilde{X}^{(t)j}$ converges weakly to $Z^{j}$ in $\D([0,1];\R)$ with the $J_1$ topology for each $j$, $\widetilde{X}^{(t)}$ converges weakly to $Z$ in $\prod_{j=1}^d \D([0,1];\R)$ equipped with the product topology.
However, $\widetilde{X}^{(t)}$ in fact converges weakly to $Z$ in $\D([0,1];\R^d)$ with the stronger topology. 
To verify this, consider the identity map $\textrm{id}: \prod_{j=1}^d \D([0,1];\R) \to \D([0,1];\R^d)$. 
Since the convergence $x_n=(x^1_n,\ldots,x^d_n)\to x=(x^1,\ldots,x^d)$ in $\prod_{j=1}^d \D([0,1];\R)$ does not imply the convergence $x_n\to x$ in $\D([0,1];\R^d)$, the identity map \textrm{id} is not continuous on the entire space $\prod_{j=1}^d \D([0,1];\R)$; for details, see \cite[Sections 3 and 12]{Whitt}. 
However, the $J_1$ version of \cite[Theorem 12.6.1]{Whitt} implies that $\textrm{id}$ is continuous at any $x=(x^1,\ldots,x^d)\in C([0,1];\R^d)$; in other words, $C([0,1];\R^d)\subset (\textrm{Disc}(\textrm{id}))^c$, 
where $\textrm{Disc}(\textrm{id})$ denotes the set of all discontinuities of $\textrm{id}$.
Since $Z$ has continuous paths, $\P(Z\in \textrm{Disc}(\textrm{id}))\le \P(Z\in (C([0,1];\R^d))^c)=0$. Therefore, by the continuous mapping theorem (see e.g., \cite[Theorem 3.4.3]{Whitt}), the weak convergence $\widetilde{X}^{(t)}\to^{\mathrm{d}} Z$ in $\prod_{j=1}^d \D([0,1];\R)$ yields the weak convergence $\widetilde{X}^{(t)}\to^{\mathrm{d}} Z$ in $\D([0,1];\R^d)$, as desired. 
\end{proof}

With Lemmas \ref{lemma:finiteness} and \ref{lemma:weakconvergence} at hand, we will prove Theorem \ref{thm:GaussianMarkov_spectral_high} following the approach taken in \cite{KP23-unified} for general L\'evy processes with regularly varying characteristic exponents. 
The next lemma shows that the amount of heat loss from deep inside the domain $D$ is negligible when compared to $\mu_t$.
Recall the notation $D_a$ defined in \eqref{def:Da}. 

\begin{lemma}\label{lemma:inside}
Let $R$ be the characteristic radius of the $C^{1,1}$ domain $D$.
 Then for any fixed $a\in(0,R/2]$,  as $t\downarrow 0$, 
\[
\int_{D_a}\P_{x}(\tau_{D}^{X}\leq t)\ud x =o(\mu_t).
\]
\end{lemma}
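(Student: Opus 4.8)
The plan is to bound the heat loss from deep inside $D$ by replacing the exit probability from $D$ with the exit probability from a large ball centered at $x$, exactly as was done in the proof of Proposition~\ref{prop:GaussianMarkov_spectral_upperboundonly}, and then to extract an exponential factor from the Borell--TIS inequality. First I would note that for $x\in D_a$ we have $\delta_D(x)>a$, so $D\supset B(x,a)$ and hence
\[
	\P_x(\tau_D^X\le t)\le \P_x(\tau_{B(x,a)}^X\le t)=\P(\tau_{B(0,a)}^X\le t)\le 2d\,\P\Bigl(\sup_{0\le s\le t}X^1_s\ge \frac{a}{\sqrt d}\Bigr),
\]
where the last inequality is the componentwise estimate already derived in the proof of Proposition~\ref{prop:GaussianMarkov_spectral_upperboundonly} (using $|x|\le \sqrt d\max_j|x^j|$ and the symmetry of $X^1$). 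Integrating over $D_a$ and using $|D_a|\le |D|$ gives
\[
	\int_{D_a}\P_x(\tau_D^X\le t)\ud x\le 2d\,|D|\,\P\Bigl(\sup_{0\le s\le t}X^1_s\ge \frac{a}{\sqrt d}\Bigr).
\]

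Next I would invoke the Borell--TIS inequality \eqref{Borell-TIS}: since $\mu_t\downarrow 0$ as $t\downarrow 0$ by \eqref{limitmeansup}, for all sufficiently small $t$ we have $\mu_t<a/\sqrt d$, and therefore
\[
	\P\Bigl(\sup_{0\le s\le t}X^1_s\ge \frac{a}{\sqrt d}\Bigr)\le 2\exp\Bigl(-\frac{(\frac{a}{\sqrt d}-\mu_t)^2}{2\sigma_t^2}\Bigr).
\]
It remains to show that this exponential term is $o(\mu_t)$ as $t\downarrow 0$. For this I would use the estimate \eqref{sigma-mu-relation} from the proof of Theorem~\ref{thm:GaussianMarkov_spectral}, namely $\sigma_t^2/\mu_t\le \frac{2}{c_2}(R_{s_\ast(t)})^{1/2}\to 0$, which in particular forces $\sigma_t^2\to 0$ and hence $\sigma_t^2=o(1)$; combined with $\mu_t\to 0$ this gives $(\frac{a}{\sqrt d}-\mu_t)^2/(2\sigma_t^2)\to\infty$, so the exponential decays faster than any power of $\sigma_t$. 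More precisely, writing $\exp(-(\frac{a}{\sqrt d}-\mu_t)^2/(2\sigma_t^2))\le \frac{2\sigma_t^2}{(\frac{a}{\sqrt d}-\mu_t)^2}$ by the elementary bound $e^{-x}<x^{-1}$ for $x>0$, and then using $\sigma_t^2=o(\mu_t)$ (from \eqref{sigma-mu-relation}) together with the fact that $(\frac a{\sqrt d}-\mu_t)^2$ is bounded away from $0$, we conclude $\P(\sup_{0\le s\le t}X^1_s\ge a/\sqrt d)=o(\mu_t)$, and therefore $\int_{D_a}\P_x(\tau_D^X\le t)\ud x=o(\mu_t)$.

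I do not expect a serious obstacle here: the lemma is essentially the $I_2(t)$ estimate of Proposition~\ref{prop:GaussianMarkov_spectral_upperboundonly} repackaged, with the only new ingredient being the passage from the bound $O(\sigma_t^2)$ to $o(\mu_t)$, which is already recorded as \eqref{sigma-mu-relation}. The one point deserving care is making sure the constant $a/\sqrt d$ in the Borell--TIS exponent stays strictly larger than $\mu_t$ so that \eqref{Borell-TIS} applies — this is automatic for small $t$ by \eqref{limitmeansup} — and confirming that the crude bound $|D_a|\le |D|$ (rather than the sharper $|\partial D_a|$-type estimate \eqref{eqn:vdBD89-b}) is adequate, which it is, since we only need $o(\mu_t)$ and not a sharp constant.
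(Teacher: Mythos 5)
Your proof is correct and takes essentially the same approach as the paper: bound the exit from $D$ by the exit from the inscribed ball of radius $a$ centered at $x$, reduce to a one-component supremum, apply the Borell--TIS inequality \eqref{Borell-TIS}, and then conclude $o(\mu_t)$ via $e^{-x}<x^{-1}$ combined with $\sigma_t^2/\mu_t\to 0$ from \eqref{sigma-mu-relation}. You are, if anything, slightly more explicit than the paper in carrying the dimension-dependent constants $2d$ and $a/\sqrt d$ through the component-wise reduction; this does not change the conclusion but makes the chain of inequalities cleaner.
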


\begin{proof}
Let $t\in (0,1]$ be small so that $a>\mu_t$. For any $x\in D_a$,
\begin{align*}
\P_{x}( \tau_{D}^X \leq t) 
\leq \P_x\left(\sup _{s\in[0,t]}|X_{s}-x| \geq a\right)
=\P\left(\sup _{s\in[0,t]}|X_{s}| \geq a\right)
=2\P\left(\sup _{s\in[0,t]}X_{s} \geq a\right)
\le 4e^{-\frac{(a-\mu_t)^2}{2\sigma_t^2}}, 
\end{align*}
where we used the Borell--TIS inequality \eqref{Borell-TIS}. The inequality $e^{-x}<x^{-1}$ valid for $x>0$ together with the asymptotic relation of $\sigma^2_t/\mu_t$ obtained from \eqref{sigma-mu-relation} yields the desired statement. 
\end{proof}

Now we start investigating the quantity $\int_{D\setminus D_a}\P_{x}(\tau_{D}^{X}\leq t)\ud x$, which is the amount of heat loss from points near the boundary of the domain $D$. Throughout, the notation
\[
	H=\{x=(\tilde{x}, x^{d})=(x^1,\ldots,x^{d-1},x^d)\in \R^d : x^{d}>0\}
\]
 denotes the upper half-space, and $\tilde{0}$ represents the zero vector in $\R^{d-1}$; in particular, a given point in $H$ that is on the last coordinate axis can be represented as $(\tilde{0},u)$ with $u>0$.

\begin{lemma}\label{lemma:half-space}
Let $R$ be the characteristic radius of the $C^{1,1}$ domain $D$.
Then for any fixed $a\in(0, R/2]$,
\[
\lim_{t\downarrow 0}\mu_t^{-1}\int_{0}^{a}\P_{(\tilde{0},u)}(\tau^{X}_{H} \leq t)\ud u=\E[\sup_{s\in[0,1]} Y_s].
\]
\end{lemma}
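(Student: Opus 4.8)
The plan is to reduce the exit problem from the half-space $H$ to a one-dimensional problem about the last coordinate $X^d$, and then to exploit the weak convergence of $Y^{(t)}$ established in Lemma~\ref{lemma:weakconvergence0} (or assumed in Theorem~\ref{thm:GaussianMarkov_spectral_high}) together with the independence of the components. First I would observe that, under $\P_{(\tilde 0,u)}$, the process $X$ starts at $(\tilde 0,u)$, and since $H=\{x^d>0\}$ depends only on the last coordinate, we have $\{\tau^X_H\le t\}=\{\inf_{s\in[0,t]}(u+X^d_s)\le 0\}=\{\sup_{s\in[0,t]}(-X^d_s)\ge u\}$. Because $-X^d$ has the same law as $X^1$ (each component is a centered Gaussian process with common law, and the law is symmetric), this gives the exact identity
\[
	\int_0^a \P_{(\tilde 0,u)}(\tau^X_H\le t)\,\ud u
	=\int_0^a \P\Bigl(\sup_{s\in[0,t]}X^1_s\ge u\Bigr)\,\ud u.
\]
So the multi-dimensional half-space problem collapses to exactly the one-dimensional quantity already analyzed in the proof of Theorem~\ref{thm:GaussianMarkov_spectral}.

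Next I would rescale. Writing $M_t:=\sup_{s\in[0,t]}X^1_s$, we have $\E[M_t]=\mu_t$ and, by the change of variables $u=\mu_t v$,
\[
	\mu_t^{-1}\int_0^a \P(M_t\ge u)\,\ud u
	=\int_0^{a/\mu_t}\P\bigl(\mu_t^{-1}M_t\ge v\bigr)\,\ud v
	=\int_0^{a/\mu_t}\P\Bigl(\sup_{s\in[0,1]}Y^{(t)}_s\ge v\Bigr)\,\ud v,
\]
using $\mu_t^{-1}M_t=\sup_{u\in[0,1]}Y^{(t)}_u$ from \eqref{def:Xtilde-0}. Since $\mu_t\downarrow 0$ by \eqref{limitmeansup}, the upper limit $a/\mu_t\to\infty$, so the target is to show this converges to $\int_0^\infty \P(\sup_{s\in[0,1]}Y_s\ge v)\,\ud v=\E[\sup_{s\in[0,1]}Y_s]$, the last equality holding because $Y$ is continuous (hence $\sup$ is attained) and $Y_0=0$ forces the supremum to be nonnegative a.s.\ (as in the argument given for $\mu_t>0$ in Section~\ref{section_2}). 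By the continuous mapping theorem applied to the supremum functional on $\D([0,1];\R)$, the weak convergence $Y^{(t)}\to^{\mathrm d}Y$ gives $\sup_{s}Y^{(t)}_s\to^{\mathrm d}\sup_s Y_s$, hence pointwise convergence of the survival functions $\P(\sup_s Y^{(t)}_s\ge v)\to\P(\sup_s Y_s\ge v)$ at every continuity point $v$ (of which all but countably many $v$ qualify), so the integrands converge a.e.\ on $[0,\infty)$.

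It remains to pass the limit through the integral, and this uniform-integrability step is the main obstacle. The integrals are over the growing interval $[0,a/\mu_t]$, so I cannot simply invoke bounded convergence; I need a tail bound on $\P(\sup_s Y^{(t)}_s\ge v)$ that is integrable in $v$ and uniform in small $t$. Here the Borell--TIS inequality \eqref{Borell-TIS}, applied to $M_t$, does the job: for $u>\mu_t$,
\[
	\P(M_t\ge u)\le 2\exp\!\Bigl(-\tfrac{(u-\mu_t)^2}{2\sigma_t^2}\Bigr),
\]
so after the substitution $u=\mu_t v$ and using $v\ge a/(2\mu_t)$ say, the exponent is $-(u-\mu_t)^2/(2\sigma_t^2)\le -(u/2)^2/(2\sigma_t^2)$ for $u\ge 2\mu_t$, and one controls $\mu_t^{-1}\int_{u\ge a/2}\P(M_t\ge u)\,\ud u$ by a bound of the form $C\sigma_t^2\mu_t^{-1}e^{-c a^2/\sigma_t^2}$, which tends to $0$ by the relation $\sigma_t^2/\mu_t\to 0$ from \eqref{sigma-mu-relation} and $e^{-x}<x^{-1}$; this is exactly the kind of estimate used for $I_4(t)$ in the proof of Theorem~\ref{thm:GaussianMarkov_spectral} and for Lemma~\ref{lemma:inside}. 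Thus the contribution of $v$ beyond any fixed threshold vanishes uniformly in $t$, while on the fixed bounded range $[0,K]$ the a.e.\ convergence of integrands plus the bound $\P(\cdot)\le 1$ gives convergence by dominated convergence; letting $K\to\infty$ and comparing with $\E[\sup_s Y_s]=\int_0^\infty\P(\sup_s Y_s\ge v)\,\ud v$ (finite by Lemma~\ref{lemma:finiteness}) finishes the argument. The only subtlety to watch is that the threshold must be chosen as a fixed constant (independent of $t$) sitting below $a/\mu_t$ for all small $t$, which is automatic since $\mu_t\to 0$.
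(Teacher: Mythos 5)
Your overall strategy matches the paper's: reduce the half-space exit to the supremum of the last coordinate, rescale so that $\mu_t^{-1}\int_0^a\P(M_t\ge u)\,\ud u=\int_0^{a/\mu_t}\P(\sup_{s\in[0,1]}Y^{(t)}_s\ge v)\,\ud v$, use the weak convergence of $Y^{(t)}$ to $Y$ to get convergence of the integrands, and control the tail with the Borell--TIS inequality. The one-dimensional reduction you put up front is a nice clean way to phrase what the paper does implicitly inside the Fatou step, and your application of the continuous mapping theorem to the supremum functional is correct (it is $J_1$-continuous).

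However, there is a genuine gap in your uniform-integrability step. What you actually need is: for every $\eps>0$ there is a fixed $K$ such that $\int_K^{a/\mu_t}\P(\sup_{s\in[0,1]}Y^{(t)}_s\ge v)\,\ud v<\eps$ for all small $t$. What you compute instead is a bound on $\int_{a/(2\mu_t)}^{a/\mu_t}\P(\sup_s Y^{(t)}_s\ge v)\,\ud v$ (equivalently, on $\mu_t^{-1}\int_{u\ge a/2}\P(M_t\ge u)\,\ud u$). Since $a/(2\mu_t)\to\infty$, that estimate says nothing about the growing intermediate range $[K,\,a/(2\mu_t)]$, so the sentence ``the contribution of $v$ beyond any fixed threshold vanishes uniformly in $t$'' does not follow from your computation. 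Fixing this requires starting the Borell--TIS estimate at $v=K$ (equivalently $u=K\mu_t$), which leads to a bound of the form $\frac{2\sigma_t^2}{\mu_t^2}e^{-\frac{(K-1)^2\mu_t^2}{2\sigma_t^2}}\le\frac{4\sigma_t^4}{(K-1)^2\mu_t^4}$, and then one must invoke the \emph{boundedness} of $\sigma_t^2/\mu_t^2$ (say $\le 2/c_2^2$, obtained as a variant of \eqref{sigma-mu-relation}) rather than the vanishing of $\sigma_t^2/\mu_t$. Note that $\sigma_t^2/\mu_t\to 0$ alone does not imply $\sigma_t^2/\mu_t^2$ is bounded, so the relation you cite is genuinely the wrong one for this step. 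Once the tail estimate is set up at the fixed threshold $K$ and closed with the $\sigma_t^2/\mu_t^2$ bound, the rest of your argument goes through.
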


\begin{proof}
Recall the family of scaled processes $\{\widetilde{X}^{(t)}:t\in (0,1]\}$ defined in Lemma \ref{lemma:weakconvergence}.  
Note that 
\begin{align*}
\tau_{H}^{X}
&=\inf\{r : X_{r}\notin H\}
=\inf\{tr: \mu_t^{-1} X_{tr}\notin \mu_t^{-1} H\}\\
&=\inf\{tr: \mu_t^{-1} X_{tr}\notin H\}
=t\inf\{r: \widetilde{X}_{r}^{(t)} \notin H\}=t\tau_{H}^{\widetilde{X}^{(t)}},
\end{align*}
where we used the fact that the upper half-space $H$ is invariant under the multiplication by any positive constant. The latter
 shows that the law of $\tau_{H}^{X}$ under $\P_{x}$ is equal to the law of $t\tau_{H}^{\widetilde{X}^{(t)}}$ under $\P_{\mu_t^{-1}x}$.
Hence, for a fixed $a\in(0,R/2]$, by the change of variables $v=\mu_t^{-1}u$,
\begin{align}\label{eqn:half-space}
\int_{0}^{a}\P_{(\tilde{0},u)}(\tau_{H}^X \leq t)\ud u
=\int_{0}^{a}\P_{(\tilde{0},\mu_t^{-1}u)}(\tau_{H}^{\widetilde{X}^{(t)}} \leq 1)\ud u
=\mu_t\int_{0}^{a \mu_t^{-1}}\P_{(\tilde{0},v)}(\tau_{H}^{\widetilde{X}^{(t)}}\leq  1)\ud v.
\end{align}
By Lemma \ref{lemma:finiteness}, for any $\eps>0$, there exists $N=N(\eps)>0$ such that 
$
\int_{0}^{N}\P(\sup_{s\in[0,1]} Y_s> v)\ud v>\E[\sup_{s\in[0,1]} Y_s]-\eps.
$
Since $\mu_t^{-1}\to \infty$ as $t\downarrow 0$, there exists $t_{0}>0$ such that $a\mu_t^{-1} \geq N$ for all $0<t\leq t_{0}$. Hence, it follows from \eqref{eqn:half-space}, Fatou's lemma, and the assumption on weak convergence of $Y^{(t)}$ to $Y$ that
\begin{align*}
&\liminf_{t\downarrow 0}\mu_t^{-1}\int_{0}^{a}\P_{(\tilde{0},u)}(\tau_{H}^X\leq t)\ud u
\geq 
\liminf_{t\downarrow 0}\int_{0}^{N}\P_{(\tilde{0},v)}(\tau_{H}^{\widetilde{X}^{(t)}}\leq 1)\ud v\\
&=\liminf_{t\downarrow 0} \int_{0}^{N}\P_v(\inf_{s\in[0,1]} Y_s^{(t)}\le 0)\ud v
=\liminf_{t\downarrow 0} \int_{0}^{N}\P(\inf_{s\in[0,1]} (Y_s^{(t)}+v)\le 0)\ud v\\
&=\liminf_{t\downarrow 0} \int_{0}^{N}\P(\sup_{s\in[0,1]} Y_s^{(t)}> v)\ud v
\ge \int_{0}^{N}\P(\sup_{s\in[0,1]} Y_s> v)\ud v 
>\E[\sup_{s\in[0,1]} Y_s]-\eps.
\end{align*}
Since $\eps>0$ is arbitrary, the lower bound  
$
\liminf_{t\downarrow 0}\mu_t^{-1}\int_{0}^{a}\P_{(\tilde{0},u)}(\tau_{H}^X\leq t)\ud u\geq \E[\sup_{s\in[0,1]} Y_s]
$
follows.

Derivation of the upper bound requires a delicate discussion. In fact, a simple modification of the above argument  would be to notice by \eqref{eqn:half-space} that 
\[
\limsup_{t\downarrow 0}\mu_t^{-1}\int_{0}^{a}\P_{(\tilde{0},u)}(\tau_{H}^X \leq t)\ud u
= \limsup_{t\downarrow 0}\int_{0}^{\infty}\P(\sup_{s\in[0,1]} Y_s^{(t)}\geq v)\mathbf{1}_{(0,a\mu_t^{-1}]}(v)\ud v
\]
and use the reverse version of Fatou's lemma for the latter limit. However, that would require the integrand $\P(\sup_{s\in[0,1]} Y_s^{(t)}\geq v)\mathbf{1}_{(0,a\mu_t^{-1}]}(v)$ to be bounded above by a function of $v$ that is both independent of $t$ and integrable on the unbounded interval $(0,\infty)$, and finding such an upper bound is a non-trivial task. 
To overcome this hurdle, fix $M>2$ and $t\in (0,1]$ so that $a\mu_t^{-1}>M$, and note by the Borell--TIS inequality \eqref{Borell-TIS} that 
\begin{align*}
	\int_{M}^{a\mu_t^{-1}}\P(\sup_{s\in[0,1]} Y_s^{(t)}> v)\ud v
	&=\int_{M}^{a\mu_t^{-1}}\P(\sup_{r\in[0,t]} X_r^{1}> v\mu_t)\ud v
	\le \int_{M}^{a\mu_t^{-1}} 2e^{-\frac{(v-1)^2\mu_t^2}{2\sigma^2_t}}\ud v\\
	&\le \int_{M}^{\infty} 2(v-1)e^{-\frac{(v-1)^2\mu_t^2}{2\sigma^2_t}}\ud v
	=\frac{2\sigma_t^2}{\mu_t^2}e^{-\frac{(M-1)^2\mu_t^2}{2\sigma_t^2}}
	\le \frac{4\sigma_t^4}{(M-1)^2\mu_t^4},
\end{align*}
where we used the inequality $e^{-x}<x^{-1}$ valid for $x>0$. 
A simple modification of the argument used to obtain \eqref{sigma-mu-relation} yields 
$\sigma_t^2\mu_t^{-2}\le 2R_{s_\ast(t)}(c_2(R_{s_\ast(t)})^{1/2})^{-2} = 2c_2^{-2}$
for all $t>0$. 
Therefore, 
\begin{align}\label{eqn:ub2}
	\int_{M}^{a\mu_t^{-1}}\P(\sup_{s\in[0,1]} Y_s^{(t)}> v)\ud v\le  \frac{16}{c_2^4(M-1)^2}.
\end{align}
Now, using \eqref{eqn:half-space} and \eqref{eqn:ub2}, applying the reverse version of Fatou's lemma on the bounded interval $(0,M]$, and using the weak convergence of $Y^{(t)}$ to $Y$, we obtain
\begin{align*}
&\limsup_{t\downarrow 0}\mu_t^{-1}\int_{0}^{a}\P_{(\tilde{0},u)}(\tau_{H}^X \leq t)\ud u
=\limsup_{t\downarrow 0}\int_{0}^{a \mu_t^{-1}}\P_{(\tilde{0},v)}(\tau_{H}^{\widetilde{X}^{(t)}}\leq  1)\ud v\\
&\le \limsup_{t\downarrow 0}\int_0^{M}\P(\sup_{s\in[0,1]} Y_s^{(t)}\ge  v)\ud v
+\limsup_{t\downarrow 0}\int_{M}^{a\mu_t^{-1}}\P(\sup_{s\in[0,1]} Y_s^{(t)}\ge v)\ud v\\
&\le\int_{0}^{M}\P(\sup_{s\in[0,1]} Y_s\ge v)\ud v +\frac{16}{c_2^4(M-1)^2}.
\end{align*}
Letting $M\uparrow \infty$ yields the upper bound
$
\limsup_{t\downarrow 0}\mu_t^{-1}\int_{0}^{a}\P_{(\tilde{0},u)}(\tau_{H}^X\leq t)\ud u\leq \E[\sup_{s\in[0,1]} Y_s].
$
\end{proof}

In the lemma below, $B((\tilde{0},R),R)$ represents the ball of radius $R$ centered at $(\tilde{0},R)\in H$.

\begin{lemma}\label{lemma:ball}
Let $R$ be the characteristic radius of the $C^{1,1}$ domain $D$.
Then for any fixed $a\in(0, R/2]$,
\[
\lim_{t\downarrow 0}\mu_t^{-1}\int_{0}^{a}\P_{(\tilde{0},u)}(\tau^{X}_{B((\tilde{0},R),R)}\leq t)\ud u=\E[\sup_{s\in[0,1]} Y_s].
\]
\end{lemma}

\begin{proof}
Since $B((\tilde{0},R),R)\subset H$, it follows that $\P_{(\tilde{0},u)}(\tau_{H}^X\leq t) \leq \P_{(\tilde{0},u)}(\tau^{X}_{B((\tilde{0},R),R)}\leq t)$, so by Lemma \ref{lemma:half-space}, the lower bound
$
\liminf_{t\downarrow 0}\mu_t^{-1}\int_{0}^{a}\P_{(\tilde{0},u)}(\tau^{X}_{B((\tilde{0},R),R)} \le t)\ud u\geq \E[\sup_{s\in[0,1]}Y_s]
$
follows. 

To derive the upper bound, recall the definition of $\widetilde{X}^{(t)}$ in Lemma \ref{lemma:weakconvergence} 
and observe that
\begin{align*}
\tau_{B((\tilde{0},R),R)}^{X}
&=\inf\{r: X_{r}\notin B((\tilde{0},R),R)\}=\inf\{tr: X_{tr}\notin B((\tilde{0},R),R)\}\\
&=\inf\{tr: \mu_t^{-1}X_{tr} \notin \mu_t^{-1}B((\tilde{0},R),R) \}
=t\inf\{r: \widetilde{X}_{r}^{(t)}\notin B((\tilde{0},\mu_t^{-1}R),\mu_t^{-1}R) \}\\
&=t\tau_{B((\tilde{0},\mu_t^{-1}R),\mu_t^{-1}R)}^{\widetilde{X}^{(t)}}.
\end{align*}
Hence, by the change of variables $v=\mu_t^{-1}u$, 
\begin{align}\label{eqn:ball lb1}
\int_{0}^{a}\P_{(\tilde{0},u)}(\tau^{X}_{B((\tilde{0},R),R)}\leq t)\ud u
&=\int_{0}^{a}\P_{(\tilde{0},\mu_t^{-1}u)}(\tau_{B((\tilde{0},\mu_t^{-1}R),\mu_t^{-1}R)}^{\widetilde{X}^{(t)}} \leq 1)\ud u\\
&=\mu_t\int_{0}^{a\mu_t^{-1}}\P_{(\tilde{0},v)}(\tau_{B((\tilde{0},\mu_t^{-1}R),\mu_t^{-1}R)}^{\widetilde{X}^{(t)}} \leq 1)\ud v.\notag
\end{align}

Note that if $v\in (0, a\mu_t^{-1}]$, then $B((\tilde{0}, v),v)\subset B((\tilde{0},\mu_t^{-1}R),\mu_t^{-1}R)$ since $a\le R/2$. 
Moreover, $\max_{1\leq j\leq d}|x^j|\leq |x|=(\sum_{j=1}^{d}(x^j)^2)^{1/2}\leq \sqrt{d}\max_{1\leq j\leq d}|x^j|$ for any $x=(x^1,\ldots, x^d)\in \R^{d}$. Therefore, for any $v\in (0, a\mu_t^{-1}]$, 
\begin{align*}
\P_{(\tilde{0},v)}(\tau_{B((\tilde{0},\mu_t^{-1}R),\mu_t^{-1}R)}^{\widetilde{X}^{(t)}} \leq 1)
&\leq \P_{(\tilde{0},v)}(\tau_{B((\tilde{0},v),v)}^{\widetilde{X}^{(t)}} \leq 1)
=\P\left(\sup_{u\in[0,1]}|\widetilde{X}_{u}^{(t)}|\geq v\right)\\
&\leq d\cdot \P\left(\sup_{u\in[0,1]}|Y_{u}^{(t)}|\geq \frac{v}{\sqrt{d}}\right)
= 2d\cdot \P\left(\sup_{u\in[0,1]}Y_{u}^{(t)}\geq \frac{v}{\sqrt{d}}\right)
\end{align*}
where $Y^{(t)}$ has the same distribution as the i.i.d.\ components of $\widetilde{X}^{(t)}$. As in the discussion given in Lemma \ref{lemma:half-space}, by the Borell--TIS inequality \eqref{Borell-TIS}, for any fixed $M>2$, 
\begin{align}\label{eqn:ball lb6}
	\limsup_{t\downarrow 0}\int_{M\sqrt{d}}^{a\mu_t^{-1}}\P_{(\tilde{0},v)}(\tau_{B((\tilde{0},\mu_t^{-1}R),\mu_t^{-1}R)}^{\widetilde{X}^{(t)}} \leq 1)\ud v
	&\le 2d\limsup_{t\downarrow 0}\int_{M\sqrt{d}}^{a\mu_t^{-1}} e^{-\frac{(\frac{v}{\sqrt{d}}-1)^2\mu_t^2}{2\sigma_t^2}}\ud v \\ 
	&\le \frac{32d\sqrt{d}}{c_2^4(M-1)^2}.\notag
\end{align}

Now, to deal with the integral $\int_{0}^{M\sqrt{d}}\P_{(\tilde{0},v)}(\tau_{B((\tilde{0},\mu_t^{-1}R),\mu_t^{-1}R)}^{\widetilde{X}^{(t)}} \leq 1)\ud v$, we construct an increasing sequence of domains $D(n)$ by
\[
D(n)=\left\{x=(\tilde{x}, x^{d})\in \R^{d-1}\times \R: |\tilde{x}|<n,\, 0<x^{d}<n,\, \cos^{-1}
\Bigl(\vec{\bf{n}}\cdot \frac{x}{|x|} \Bigr)
<\frac{\pi}{2}-\frac{1}{n} \right\},
\]
where $\vec{\bf{n}}=(\tilde{0},1)$ (see Figure \ref{figure_Dn}).
\begin{figure}
\begin{center}
\includegraphics[height=3.5cm,clip]{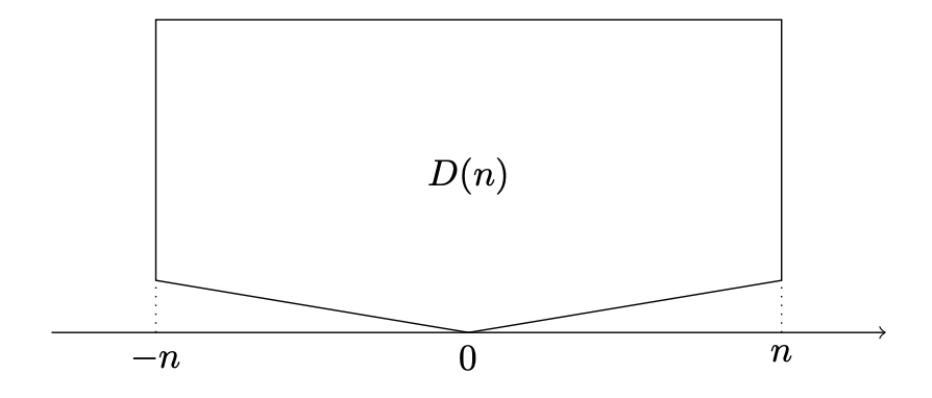}
\caption{The domain $D(n)$ increases to the upper half-space $H$.}
\label{figure_Dn}
\end{center}
\end{figure}
For the limiting process $Z$ in Lemma \ref{lemma:weakconvergence}, 
since $D(n)$ increases to $H$ as $n\to\infty$, it follows from the bounded convergence theorem that
\[
\lim_{n\to\infty}\int_{0}^{M\sqrt{d}}\P_{(\tilde{0},v)}(\tau_{D(n)}^{Z}\leq 1)\ud v=\int_{0}^{M\sqrt{d}}\P_{(\tilde{0},v)}(\tau_{H}^{Z}\leq 1)\ud v,
\]
where the sequence converges decreasingly.
Therefore, for a given $\eps>0$, we may take an integer $N=N(\eps)$ such that 
\begin{align}\label{eqn:ball lb2}
\int_{0}^{M\sqrt{d}}\P_{(\tilde{0},v)}(\tau_{D(N)}^{Z}\leq 1)\ud v<\int_{0}^{M\sqrt{d}}\P_{(\tilde{0},v)}(\tau_{H}^{Z}\leq 1)\ud v + \eps.
\end{align}
Since $B((\tilde{0},\mu_t^{-1}R),\mu_t^{-1}R)$ increases to $H$ as $t\downarrow 0$, we can take $t_{0}=t_{0}(N)\in (0,1]$ such that for any $t\in (0, t_{0}]$,
$
D(N)  \subset B((\tilde{0},\mu_t^{-1}R),\mu_t^{-1}R), 
$
and hence,
\begin{align}\label{eqn:ball lb3}
\P_{(\tilde{0},v)}(\tau_{B((\tilde{0},\mu_t^{-1}R),\mu_t^{-1}R)}^{\widetilde{X}^{(t)}} \leq 1)
\leq \P_{(\tilde{0},v)}(\tau_{D(N)}^{\widetilde{X}^{(t)}}\leq 1).
\end{align}
Combining \eqref{eqn:ball lb2} and \eqref{eqn:ball lb3}, applying the reverse Fatou's lemma on the bounded interval $(0,M\sqrt{d}]$, and using the weak convergence of $\widetilde{X}^{(t)}$ to $Z$ in Lemma \ref{lemma:weakconvergence}, we obtain
\begin{align}\label{eqn:ball lb7}
&\limsup_{t\downarrow 0}\int_{0}^{M\sqrt{d}}\P_{(\tilde{0},v)}(\tau_{B((\tilde{0},\mu_t^{-1}R),\mu_t^{-1}R)}^{\widetilde{X}^{(t)}} \leq 1)\ud v
\le\limsup_{t\downarrow 0}\int_{0}^{M\sqrt{d}}\P_{(\tilde{0},v)}(\tau_{D(N)}^{\widetilde{X}^{(t)}} \leq 1)\ud v \\
&\le \int_{0}^{M\sqrt{d}}\P_{(\tilde{0},v)}(\tau_{D(N)}^{Z} \leq 1)\ud v
<\int_{0}^{M\sqrt{d}}\P_{(\tilde{0},v)}(\tau_{H}^{Z}\leq 1)\ud v+\eps\notag\\
&=\int_{0}^{M\sqrt{d}}\P(\sup_{s\in[0,1]}Y_s\ge 1)\ud v+\eps.\notag
\end{align}

Finally, putting together \eqref{eqn:ball lb1}, \eqref{eqn:ball lb6} and \eqref{eqn:ball lb7} yields
\begin{align*}
\limsup_{t\downarrow 0}\mu_t^{-1}\int_{0}^{a}\P_{(\tilde{0},u)}(\tau^{X}_{B((\tilde{0},R),R)}\leq t)\ud u
<\int_{0}^{M\sqrt{d}}\P(\sup_{s\in[0,1]}Y_s\ge 1)\ud v+\eps
	+ \frac{32d\sqrt{d}}{c_2^4(M-1)^2}.
\end{align*}
Letting $\eps\downarrow 0$ and $M\uparrow \infty$ yields the upper bound
$
\limsup_{t\downarrow 0}\mu_t^{-1}\int_{0}^{a}\P_{(\tilde{0},u)}(\tau^{X}_{B((\tilde{0},R),R)}\le t)\ud u\leq \E[\sup_{s\in[0,1]}Y_s].
$
\end{proof}

Recall the notations $\delta_D(x)$ and $D_a$ defined in \eqref{def:Da}.
For a starting point $x\in D\setminus D_{R/2}$ (so that $\delta_D(x)\le R/2$), let $z_x$ denote the unique point in $\partial D$ such that $|x-z_x|=\delta_{D}(x)$, and let $\mathbf{n}_{z_{x}}=(z_{x}-x)/|z_x-x|$ denote the outward unit normal vector to $\partial D$ at the point $z_x$. 
With the \textit{interior $R$-ball condition} in \eqref{def:interior-exterior} in mind,
let $H_{x}$ denote the unique half-space containing the interior $R$-ball at the point $z_x$ whose normal vector is $\textbf{n}_{z_{x}}$.

\begin{lemma}\label{lemma:cancellation}
Let $R$ be the characteristic radius of the $C^{1,1}$ domain $D$.
Then for any fixed $a\in(0, R/2]$, as $t\downarrow 0$, 
\[
\int_{D\setminus D_{a}}\P_{x}(\tau^{X}_{D}\leq t<\tau^{X}_{H_x})\ud x=o(\mu_t). 
\]
\end{lemma}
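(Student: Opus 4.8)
The plan is to show that the event $\{\tau^X_D \le t < \tau^X_{H_x}\}$ forces the process, started at $x$, to have exited the domain $D$ while remaining inside the interior half-space $H_x$; by the interior $R$-ball condition this can only happen through the ``sliver'' region $H_x \setminus B_x$, where $B_x = B(z_x - R\mathbf{n}_{z_x}, R)$ is the interior $R$-ball at $z_x$. Since $B_x \subset D \subset \overline{H_x}$, on this event we have $\tau^X_{B_x} \le t < \tau^X_{H_x}$, so it suffices to estimate $\int_{D\setminus D_a} \P_x(\tau^X_{B_x} \le t < \tau^X_{H_x})\,\ud u$ and show it is $o(\mu_t)$.

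First I would parametrize $D\setminus D_a$ using the boundary: writing $x = z - r\mathbf{n}_z$ for $z\in\partial D$ and $r = \delta_D(x) \in (0,a]$, the coarea-type formula (as already used in the proof of Proposition \ref{prop:GaussianMarkov_spectral_upperboundonly}, together with \eqref{eqn:vdBD89-b}) bounds the integral by $2^{d-1}|\partial D|\int_0^a \P_{(\tilde 0, r)}(\tau^X_{B((\tilde 0,R),R)} \le t < \tau^X_H)\,\ud r$, after translating and rotating so that $z$ goes to the origin, $\mathbf{n}_z$ to $(\tilde 0, 1)$, $H_x$ to $H$, and $B_x$ to $B((\tilde 0, R), R)$. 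The key point is then the identity
\[
	\P_{(\tilde 0, r)}(\tau^X_{B((\tilde 0,R),R)} \le t) = \P_{(\tilde 0, r)}(\tau^X_{B((\tilde 0,R),R)} \le t < \tau^X_H) + \P_{(\tilde 0, r)}(\tau^X_H \le t),
\]
which holds because $B((\tilde 0,R),R)\subset H$ forces $\tau^X_{B((\tilde 0,R),R)} \le \tau^X_H$, so the two events on the right partition $\{\tau^X_{B((\tilde 0,R),R)}\le t\}$. Integrating over $r\in(0,a]$ and dividing by $\mu_t$, the left-hand integral converges to $\E[\sup_{s\in[0,1]} Y_s]$ by Lemma \ref{lemma:ball}, while the second integral on the right converges to the same limit $\E[\sup_{s\in[0,1]} Y_s]$ by Lemma \ref{lemma:half-space}. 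Hence the middle integral, when divided by $\mu_t$, tends to $0$, which is exactly the claim.

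The main obstacle I anticipate is the geometric reduction in the first step, namely verifying carefully that $\{\tau^X_D \le t < \tau^X_{H_x}\} \subset \{\tau^X_{B_x} \le t < \tau^X_{H_x}\}$ (equivalently $D \subset \overline{H_x}$, which follows from the interior ball condition but should be stated precisely), and making sure the boundary parametrization of $D\setminus D_a$ is legitimate — i.e., that for $a\le R/2$ every point of $D\setminus D_a$ has a unique nearest boundary point $z_x$ and that the map $(z,r)\mapsto z - r\mathbf{n}_z$ gives the change of variables with the surface-measure bound \eqref{eqn:vdBD89-b} controlling the $(d-1)$-dimensional slices $\partial D_r$. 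Once the translation/rotation invariance of the Gaussian law (via \eqref{familyprobabilitymeasures}, and the fact that the i.i.d.\ Gaussian components make the law rotation-invariant) is invoked to reduce to the half-space $H$ and the canonical ball $B((\tilde 0,R),R)$, the rest is a clean subtraction of the two limits supplied by Lemmas \ref{lemma:half-space} and \ref{lemma:ball}.
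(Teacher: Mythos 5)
Your proposal is correct and follows the paper's proof essentially step for step: dominate the event by exit from the canonical interior ball while staying in the canonical half-space, pass to the boundary-layer integral via \eqref{eqn:vdBD89-b}, use the partition identity $\P(\tau^X_B\le t)=\P(\tau^X_B\le t<\tau^X_H)+\P(\tau^X_H\le t)$ (valid since $B((\tilde 0,R),R)\subset H$ forces $\tau^X_B\le \tau^X_H$), and subtract the two limits supplied by Lemmas~\ref{lemma:half-space} and~\ref{lemma:ball}. One small inaccuracy worth flagging: your parenthetical claim that $D\subset\overline{H_x}$ is not true for a general (non-convex) $C^{1,1}$ domain, but it is also not needed; the inclusion $\{\tau^X_D\le t<\tau^X_{H_x}\}\subset\{\tau^X_{B_x}\le t<\tau^X_{H_x}\}$ follows from $B_x\subset D$ alone (which gives $\tau^X_{B_x}\le\tau^X_D$), and that is precisely the interior $R$-ball condition at $z_x$.
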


\begin{proof}
By \eqref{eqn:vdBD89-b},
\begin{align*}
&\int_{D\setminus D_{a}}\P_{x}(\tau^{X}_{D}\leq t<\tau^{X}_{H_x})\ud x
\leq 2^{d-1}|\partial D|\int_{0}^{a}\P_{(\tilde{0}, u)}(\tau^{X}_{B((\tilde{0},R),R)}  \leq t<\tau^{X}_{H}   )\ud u\\
&=2^{d-1}|\partial D|\left( \int_{0}^{a}\P_{(\tilde{0}, u)}( \tau^{X}_{B((\tilde{0},R),R)} \leq t)\ud u-\int_{0}^{a}\P_{(\tilde{0}, u)}( \tau^{X}_{H} \leq t)\ud u\right).
\end{align*}
Applying Lemmas \ref{lemma:half-space} and \ref{lemma:ball} gives the desired conclusion.  
\end{proof}

Now we are ready to derive the upper bound for Theorem \ref{thm:GaussianMarkov_spectral_high}. 

\begin{proof}[Derivation of the upper bound for Theorem \ref{thm:GaussianMarkov_spectral_high}.]
Let $R$ be the characteristic radius of the $C^{1,1}$ domain $D$.
It follows from \eqref{eqn:vdBD89} that for any $\eps>0$, there exists $a=a(\eps)\in (0,R/2]$ such that 
\begin{align}\label{D-and-Du}
	|\partial D| - \eps < |\partial D_{u}|<|\partial D| +\eps \ \ \textrm{for all} \ u\leq a,
\end{align}
where the notation $D_u$ is defined as in \eqref{def:Da}.
For this particular $a$, note that
\[
|D|-Q_{D}^{X}(t)=\int_{D_{a}}\P_{x}(\tau_{D}^{X}\leq t)\ud x +\int_{D\setminus D_{a}}\P_{x}(\tau_{D}^{X}\leq t)\ud x.
\]
The first term on the right-hand side is negligible when compared to $\mu_t$ due to Lemma \ref{lemma:inside}. 
On the other hand, for any $x\in D$ with $\delta_{D}(x)\le a$ (in other words, $x\in D\setminus D_a$), it follows that 
$
\{\tau_{D}^{X}\leq t\}\subset \{\tau^{X}_{H_x}\leq t\} \cup \{\tau_{D}^{X}\leq t <\tau_{H_x}^{X}\},
$
and therefore,
\begin{align}\label{inequality_split_1}
\int_{D\setminus D_{a}}\P_{x}(\tau_{D}^{X}\leq t)\ud x
\leq \int_{D\setminus D_{a}}\P_{x}(\tau_{H_x}^{X}\leq t)\ud x
 +\int_{D\setminus D_{a}}\P_{x}(\tau_{D}^{X}\leq t <\tau_{H_x}^{X})\ud x.
\end{align} 
The first integral on the right-hand side equals
$\int_{0}^{a}|\partial D_{u}|\cdot\P_{(\tilde{0}, u)}(\tau_{H}^{X}\leq t)\ud u,$
so \eqref{D-and-Du} and Lemma \ref{lemma:half-space} together give
\begin{align}\label{inequality_split_2}
\lim_{t\downarrow 0}\mu_t^{-1}\int_{D\setminus D_{a}}\P_{x}(\tau_{H_x}^{X}\leq t)\ud x=|\partial D|\cdot\E[\sup_{s\in[0,1]}Y_s].
\end{align}
Combining \eqref{inequality_split_1}, \eqref{inequality_split_2}, and Lemma \ref{lemma:cancellation} yields the upper bound
\[
\limsup_{t\downarrow 0}\mu_t^{-1}(|D|-Q_{D}^{X}(t))\leq |\partial D|\cdot\E[\sup_{s\in[0,1]}Y_s],
\]
as desired. 
\end{proof}

Establishing the lower bound for Theorem \ref{thm:GaussianMarkov_spectral_high} requires two additional lemmas,
Lemmas \ref{lemma:outer ball} and \ref{lemma:cancellation2}, which are analogous to Lemmas \ref{lemma:ball} and \ref{lemma:cancellation}, respectively. 
For the lower bound, we take advantage of the \textit{exterior $R$-ball condition} in \eqref{def:interior-exterior}
for the $C^{1,1}$ open set $D$. 
Lemma \ref{lemma:outer ball} considers the first exit time $\tau^{X}_{B((\tilde{0},-R),R)^{c}}$ from the \textit{complement} of the ball $B((\tilde{0},-R),R)$ of radius $R$ centered at the point $(\tilde{0},-R)$, which is located in the lower half-space 
\[
	H'=\{x=(x^1\ldots, x^{d})\in \R^d : x^{d}< 0\}.
\]

\begin{lemma}\label{lemma:outer ball}
Let $R$ be the characteristic radius of the $C^{1,1}$ domain $D$.
Then for any fixed $a\in(0, R/2]$,
\[
\lim_{t\downarrow 0}\mu_t^{-1}\int_{0}^{a}\P_{(\tilde{0},u)}(\tau^{X}_{B((\tilde{0},-R),R)^{c}}\leq t)\ud u=\E[\sup_{s\in[0,1]}Y_s].
\]
\end{lemma}

\begin{proof}
The proof is similar to the proof of Lemma \ref{lemma:ball}, but we provide the details for the reader's convenience. 
Fix $a\in (0,R/2]$.
Since $H\subset B((\tilde{0},-R),R)^{c}$, it follows that $\P_{(\tilde{0},u)}(\tau^{X}_{B((\tilde{0},-R),R)^{c}}\le t)\leq \P_{(\tilde{0},u)}(\tau_{H}^{X}\le t)$, so by Lemma \ref{lemma:half-space}, the upper bound
$
\limsup_{t\downarrow 0}\mu_t^{-1}\int_{0}^{a}\P_{(\tilde{0},u)}(\tau^{X}_{B((\tilde{0},-R),R)^c}\leq t)\ud u\leq \E[\sup_{s\in[0,1]}Y_s]
$
follows.

To derive the lower bound, let $\eps>0$ and recall that $\E[\sup_{s\in[0,1]}Y_s]\le 1<\infty$. There exists an integer $M=M(\eps)>0$ such that
\begin{align}\label{eqn:ex ball lb3}
\int_{0}^{M}\P(\sup_{s\in[0,1]}Y_s> x)\ud x>\E[\sup_{s\in[0,1]}Y_s]-\eps.
\end{align}
As in the proof of Lemma \ref{lemma:half-space}, one can verify that the law of $\tau_{B((\tilde{0},-R),R)^{c}}^{X}$ under $\P_{x}$ is equal to the law of $t\tau_{B((\tilde{0},-\mu_t^{-1}R),\mu_t^{-1}R)^{c}}^{\widetilde{X}^{(t)}}$ under $\P_{\mu_t^{-1}x}$, 
where $\widetilde{X}^{(t)}$ is the scaled process defined in Lemma \ref{lemma:weakconvergence}. 
Hence, by the change of variables $v=\mu_t^{-1}u$,  
\begin{align}\label{eqn:ex ball lb1}
&\int_{0}^{a}\P_{(\tilde{0},u)}(\tau^{X}_{B((\tilde{0},-R),R)^{c}}\leq t)\ud u
=\int_{0}^{a}\P_{(\tilde{0},\mu_t^{-1}u)}(\tau_{B((\tilde{0},-\mu_t^{-1}R),\mu_t^{-1}R)^{c}}^{\widetilde{X}^{(t)}} \leq 1)\ud u\\
&=\mu_t\int_{0}^{a\mu_t^{-1}/2}\P_{(\tilde{0},v)}(\tau_{B((\tilde{0},-\mu_t^{-1}R),\mu_t^{-1}R)^{c}}^{\widetilde{X}^{(t)}} \leq 1)\ud v
\geq \mu_t\int_{0}^{M}\P_{(\tilde{0},v)}(\tau_{B((\tilde{0},-\mu_t^{-1}R),\mu_t^{-1}R)^c}^{\widetilde{X}^{(t)}} \leq 1)\ud v\notag
\end{align}
for all sufficiently small $t>0$ (since $\mu_t^{-1}\to\infty$ as $t\downarrow 0$). 

Now we construct an increasing sequence of domains $E(n)$ by
\[
E(n)=\left\{(\tilde{x}, x^{d})\in \R^{d-1}\times \R: |\tilde{x}|<n,\, -n<x^{d}< 0,\, \cos^{-1}
\Bigl(-\vec{\bf{n}}\cdot \frac{x}{|x|} \Bigr)
<\frac{\pi}{2}-\frac{1}{n} \right\},
\]
where $-\vec{\bf{n}}=(\tilde{0},-1)$ (see Figure \ref{figure_En}). 
\begin{figure}
\begin{center}
\includegraphics[height=3.5cm,clip]{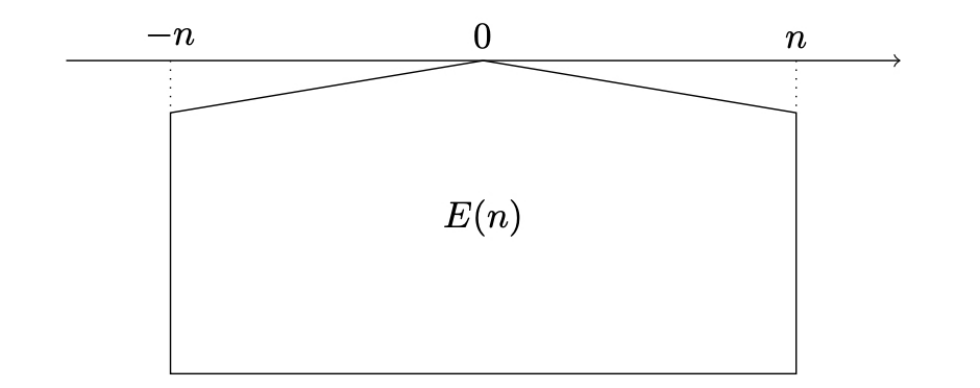}
\caption{The domain $E(n)$ increases to the lower half-space $H'$.}
\label{figure_En}
\end{center}
\end{figure}
For the limiting process $Z$ in Lemma \ref{lemma:weakconvergence}, since $E(n)$ increases to the lower half-space $H'=\{(x^1,\ldots, x^{d}) : x^{d}< 0\}$ as $n\to\infty$, it follows from the monotone convergence theorem that 
\[
\lim_{n\to\infty}\int_{0}^{M}\P_{(\tilde{0},v)}(\tau_{E(n)^c}^{Z}< 1)\ud v=\int_{0}^{M}\P_{(\tilde{0},v)}(\tau_{(H')^{c}}^{Z}< 1)\ud v = 
  \int_{0}^{M}\P_{(\tilde{0},v)}(\tau_{\overline{H}}^{Z}< 1)\ud v.
\]
Since the latter sequence converges increasingly, we may take an integer $N=N(\eps)$ such that 
\begin{align}\label{eqn:ex ball lb2}
\int_{0}^{M}\P_{(\tilde{0},v)}(\tau_{E(N)^{c}}^{Z}< 1)\ud v>\int_{0}^{M}\P_{(\tilde{0},v)}(\tau_{\overline{H}}^{Z}< 1)\ud v - \eps.
\end{align}
Since $B((\tilde{0},-\mu_t^{-1}R),\mu_t^{-1}R)$ increases to $H'$ as $t\downarrow 0$, we can take $t_{0}=t_{0}(N)\in (0,1]$ such that for any $t\in (0, t_{0}]$,
$
E(N)  \subset B((\tilde{0},-\mu_t^{-1}R),\mu_t^{-1}R), 
$
which implies that 
\begin{align}\label{eqn:ex ball lb4}
\P_{(\tilde{0},v)}(\tau_{B((\tilde{0},-\mu_t^{-1}R),\mu_t^{-1}R)^{c}}^{\widetilde{X}^{(t)}} \leq 1)
\geq \P_{(\tilde{0},v)}(\tau_{E(N)^{c}}^{\widetilde{X}^{(t)}}\leq 1).
\end{align}

Combining \eqref{eqn:ex ball lb1}--\eqref{eqn:ex ball lb4} and using Fatou's lemma and the weak convergence of $\widetilde{X}^{(t)}$ to $Z$ in Lemma \ref{lemma:weakconvergence}, we obtain
\begin{align*}
&\liminf_{t\downarrow 0}\mu_t^{-1}\int_{0}^{a}\P_{(\tilde{0},u)}(\tau^{X}_{B((\tilde{0},-R),R)^{c}}\leq t)\ud u
\geq \liminf_{t\downarrow 0}\int_{0}^{M}\P_{(\tilde{0},v)}(\tau_{E(N)^{c}}^{\widetilde{X}^{(t)}} < 1)\ud v\\
&\ge \int_{0}^{M}\P_{(\tilde{0},v)}(\tau_{E(N)^{c}}^{Z}< 1)\ud v
> \int_{0}^{M}\P_{(\tilde{0},v)}(\tau_{\overline{H}}^{Z}< 1)\ud v - \eps
= \int_{0}^{M}\P(\sup_{s\in[0,1)}Y_s> v)\ud v-\eps\\
&= \int_{0}^{M}\P(\sup_{s\in[0,1]}Y_s> v)\ud v-\eps
> \E[\sup_{s\in[0,1]}Y_s]-2\eps,
\end{align*}
where the last two lines follow from the continuity of sample paths of $Y$ and \eqref{eqn:ex ball lb3}.
Since $\eps>0$ is arbitrary, we conclude that 
$
\liminf_{t\downarrow 0}\mu_t^{-1}\int_{0}^{a}\P_{(\tilde{0},u)}(\tau^{X}_{B((\tilde{0},-R),R)^{c}}\leq t)\ud u\geq \E[\sup_{s\in[0,1]}Y_s],
$
as desired. 
\end{proof}

Recall that for a starting point $x\in D\setminus D_{R/2}$, $H_{x}$ denotes the half-space containing the interior $R$-ball at the point $z_x$ and tangent to $\partial D$. Let $\tilde{B_x}$ denote the unique exterior $R$-ball in $D^{c}$ touching the point $z_x$. (Clearly, $H_x\subset \tilde{B_x}^{c}$.) 

\begin{lemma}\label{lemma:cancellation2}
Let $R$ be the characteristic radius of the $C^{1,1}$ domain $D$.
Then for any fixed $a\in(0, R/2]$, as $t\downarrow 0$, 
\[
\int_{D\setminus D_{a}}\P_{x}(\tau_{H_x}^{X}\leq t <\tau_{\tilde{B_x}^{c}}^{X} )\ud x=o(\mu_t). 
\]
\end{lemma}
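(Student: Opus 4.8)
The plan is to reduce the estimate to the two limit identities already proved in Lemmas~\ref{lemma:half-space} and~\ref{lemma:outer ball}, in exact parallel with the way Lemma~\ref{lemma:cancellation} was reduced to Lemmas~\ref{lemma:half-space} and~\ref{lemma:ball}. First I would record the elementary identity that makes the cancellation work. Since $H_x\subset\tilde{B_x}^{c}$, the first exit time from the larger set dominates, i.e.\ $\tau_{H_x}^{X}\le\tau_{\tilde{B_x}^{c}}^{X}$ $\P_x$-a.s., so $\{\tau_{\tilde{B_x}^{c}}^{X}\le t\}\subset\{\tau_{H_x}^{X}\le t\}$ and hence
\[
\P_x\bigl(\tau_{H_x}^{X}\le t<\tau_{\tilde{B_x}^{c}}^{X}\bigr)=\P_x\bigl(\tau_{H_x}^{X}\le t\bigr)-\P_x\bigl(\tau_{\tilde{B_x}^{c}}^{X}\le t\bigr)\ge 0.
\]

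Next I would integrate this over $D\setminus D_a$ and pass to the canonical configuration at each boundary point. For $x\in D\setminus D_a$ with $u:=\delta_D(x)\in(0,a]$, an orthonormal change of coordinates centered at $z_x$ carrying $-\mathbf{n}_{z_x}$ to $(\tilde 0,1)$ sends $x$ to $(\tilde 0,u)$, the interior half-space $H_x$ to $H$, and the exterior ball $\tilde{B_x}$ to $B((\tilde 0,-R),R)$; this change of coordinates preserves the law of $X$ because its i.i.d.\ zero-mean Gaussian components make $X$ isotropic, its finite-dimensional laws being invariant under $O(d)$. Parametrizing $D\setminus D_a$ by the distance to $\partial D$ and using the uniform bound $|\partial D_u|\le 2^{d-1}|\partial D|$ from~\eqref{eqn:vdBD89-b} together with the nonnegativity above, I get
\[
\int_{D\setminus D_{a}}\P_{x}\bigl(\tau_{H_x}^{X}\le t<\tau_{\tilde{B_x}^{c}}^{X}\bigr)\ud x
\le 2^{d-1}|\partial D|\int_{0}^{a}\Bigl(\P_{(\tilde 0,u)}\bigl(\tau_{H}^{X}\le t\bigr)-\P_{(\tilde 0,u)}\bigl(\tau_{B((\tilde 0,-R),R)^{c}}^{X}\le t\bigr)\Bigr)\ud u.
\]
Finally I would divide by $\mu_t$ and let $t\downarrow 0$: Lemma~\ref{lemma:half-space} gives $\mu_t^{-1}\int_0^a\P_{(\tilde 0,u)}(\tau_{H}^{X}\le t)\,\ud u\to\E[\sup_{s\in[0,1]}Y_s]$, and Lemma~\ref{lemma:outer ball} gives $\mu_t^{-1}\int_0^a\P_{(\tilde 0,u)}(\tau_{B((\tilde 0,-R),R)^{c}}^{X}\le t)\,\ud u\to\E[\sup_{s\in[0,1]}Y_s]$ as well, so the difference tends to $0$, which is exactly $\int_{D\setminus D_{a}}\P_{x}(\tau_{H_x}^{X}\le t<\tau_{\tilde{B_x}^{c}}^{X})\ud x=o(\mu_t)$.

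I do not expect a genuine analytic difficulty here; the content of the lemma is the \emph{cancellation} observation that the leftover probability $\P_x(\tau_{H_x}^{X}\le t<\tau_{\tilde{B_x}^{c}}^{X})$ measures the gap between exiting the interior half-space and exiting the complement of the exterior ball, two quantities shown in Lemmas~\ref{lemma:half-space} and~\ref{lemma:outer ball} to have the same first-order asymptotics $|\partial D|\,\E[\sup_{s\in[0,1]}Y_s]$. The only points requiring care are keeping that gap nonnegative so that the uniform estimate $|\partial D_u|\le 2^{d-1}|\partial D|$ may legitimately be applied, and the routine reduction to the canonical boundary configuration, which rests on the isotropy of $X$ and the uniform interior/exterior $R$-ball condition~\eqref{def:interior-exterior}.
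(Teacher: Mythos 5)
Your argument is correct and follows essentially the same route as the paper: parametrize $D\setminus D_a$ by the distance to $\partial D$, use isotropy to reduce to the canonical half-space and exterior ball, invoke $|\partial D_u|\le 2^{d-1}|\partial D|$ (legitimate because the integrand is nonnegative, as you correctly note), split the probability of $\{\tau_{H}^X\le t<\tau_{B((\tilde 0,-R),R)^c}^X\}$ as a difference, and then apply Lemmas~\ref{lemma:half-space} and~\ref{lemma:outer ball}. The paper carries out exactly these steps, merely more tersely.
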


\begin{proof}
By \eqref{eqn:vdBD89-b},
\begin{align*}
&\int_{D\setminus D_{a}}\P_{x}(\tau_{H_x}^{X}\leq t <\tau_{\tilde{B_x}^{c}}^{X} )\ud x
=\int_0^{a} |\partial D_u| \cdot\P_{(\tilde{0},u)} (\tau_{H}^{X}\leq t <\tau_{B((\tilde{0},-R),R)^{c}}^{X} )\ud u\\
&\le 2^{d-1} |\partial D|\int_0^{a} \P_{(\tilde{0},u)} (\tau_{H}^{X}\leq t <\tau_{B((\tilde{0},-R),R)^{c}}^{X} )\ud u\\
&= 2^{d-1} |\partial D|\left(\int_0^{a} \P_{(\tilde{0},u)} (\tau_{H}^{X}\leq t)\ud u -\int_0^{a} \P_{(\tilde{0},u)} (\tau_{B((\tilde{0},-R),R)^{c}}^{X} \le t)\ud u\right).
\end{align*}
Applying Lemmas \ref{lemma:half-space} and \ref{lemma:outer ball} gives the desired conclusion.  
\end{proof}

\begin{proof}[Derivation of the lower bound for Theorem \ref{thm:GaussianMarkov_spectral_high}.]
Let $R$ be the characteristic radius of the $C^{1,1}$ domain $D$.
As we did in the derivation of the upper bound for Theorem \ref{thm:GaussianMarkov_spectral_high}, for a fixed $\eps>0$, take $a=a(\eps)\in(0, R/2]$ such that \eqref{D-and-Du} holds, which results in \eqref{inequality_split_2}. 

Under $\P_x$ with $x\in D\setminus D_a$, where $D_a$ is defined in \eqref{def:Da}, 
\begin{align*}
\{\tau_{H_x}^{X}\leq t\} 
&\subset \{\tau_{H_x}^{X}\leq t, \tau_{D}^{X} \leq t\} \cup \{\tau_{H_x}^{X}\leq t<\tau_{D}^{X} \}
\subset  \{\tau_{D}^{X} \leq t\} \cup \{\tau_{H_x}^{X}\leq t <\tau_{D}^{X} \}\notag\\
&\subset  \{\tau_{D}^{X} \leq t\} \cup \{\tau_{H_x}^{X}\leq t <\tau_{\tilde{B_x}^{c}}^{X} \}.\notag
\end{align*}
The latter implies that
$
\P_{x}(\tau_{H_x}^{X}\leq t)\leq \P_{x}(\tau_{D}^{X} \leq t) + \P_{x}(\tau_{H_x}^{X}\leq t <\tau_{\tilde{B_x}^{c}}^{X} ).
$
Hence, 
\begin{align*}
|D|-Q^{X}_{D}(t)
&=\int_{D}\P_{x}(\tau^{X}_{D}\leq t)\ud x
\geq \int_{D\setminus D_{a}}\P_{x}(\tau^{X}_{D}\leq t)\ud x\\
&\geq \int_{D\setminus D_{a}}\P_{x}(\tau_{H_x}^{X}\leq t)\ud x-\int_{D\setminus D_{a}}\P_{x}(\tau_{H_x}^{X}\leq t <\tau_{\tilde{B_x}^{c}}^{X} )\ud x.
\end{align*}
Combining \eqref{inequality_split_2} and Lemma \ref{lemma:cancellation2} gives the lower bound
\[
\liminf_{t\downarrow 0}\mu_t^{-1}(|D|-Q_{D}^{X}(t))\geq |\partial D|\cdot\E[\sup_{s\in[0,1]}Y_s],
\]
as desired.
\end{proof}

\vspace{3mm}

\noindent
\textbf{Acknowledgements:} 
The authors are very grateful to the anonymous referees for
their careful reading and detailed comments that significantly helped improve the exposition of the paper and make some statements precise.

  \bibliographystyle{plain} 
\bibliography{KobayashiK}

\end{document}